\renewcommand{\d}{\partial}
\newcommand{\dbar}{\overline{\partial}}
\newcommand{\ddbar}{\sqrt{-1}\d\overline{\d}}
\newtheorem{thm}{Theorem}
\newtheorem{lem}[thm]{Lemma}
\theoremstyle{definition}
\newtheorem{rem}[thm]{Remark}
\renewcommand{\[}{\begin{equation}}
\renewcommand{\]}{\end{equation}}
\newcommand{\al}{\alpha}
\newcommand{\be}{\beta}
\newcommand{\la}{\lambda}
\newcommand{\ep}{\epsilon}
\newcommand{\Ga}{\Gamma}
\newcommand{\La}{\Lambda}
\newcommand{\vep}{\varepsilon}
\newcommand{\n}{\Vert}
\newcommand{\RR}{\mathbb{R}}
\newcommand{\CC}{\mathbb{C}}
\newcommand{\PP}{\mathbb{P}}
\newcommand{\Rc}{\mathrm{Rc}}
\newcommand{\Ric}{\mathrm{Ric}}
\title[Monge-Amp\`ere equations and uniformisation]{The complex Monge-Amp\`ere equation and an application to uniformisation of surfaces}
\author[V. Datar]{Ved Datar}
\author[V. P. Pingali]{Vamsi Pritham Pingali}
\author[H. Seshadri]{Harish Seshadri}
\address{Department of Mathematics, Indian Institute of Science, Bengaluru, India}
\email{vvdatar@iisc.ac.in}
\email{harish@iisc.ac.in}
\email{vamsipingali@iisc.ac.in}
\thanks{The first author (Datar) is supported in part by ANRF MATRICS grant MTR/2022/000260 and an INSA Young Associate fellowship.}
\begin{document}

\maketitle

\begin{abstract}
We prove that a complete noncompact  K\"ahler surface with positive and bounded sectional curvature is biholomorphic to $\CC^2$. This result confirms a special case of Yau's conjecture that a complete noncompact K\"ahler $n$-manifold with positive holomorphic bisectional curvature is biholomorphic to $\CC^n$. In contrast to all known results on Yau's  conjecture, we do not need additional assumptions on the global/asymptotic  geometry of the K\"ahler surface apart from completeness.
Towards this end, we prove that the integral of the square of the Ricci form of a complete K\"ahler surface with positive sectional curvature is finite. The work of Chen and Zhu shows that this latter result implies that the surface is biholomorphic to $\CC^2$ . The main new idea is the construction of a Lipschitz continuous plurisubharmonic weight function with finite Monge-Amp\`ere mass. This weight function is obtained by solving a complex Monge-Amp\`ere equation.

\end{abstract}

\section{Introduction}

Let $(X, \omega)$ be a complete K\"ahler manifold of complex dimension $n$ and positive holomorphic bisectional curvature (which we denote by $BK_\omega>0$). If $X$ is compact, it is biholomorphic to $\PP^n$ by the resolution of the Frankel conjecture by Siu-Yau \cite{SY} and Mori \cite{Mo}. If $X$ is noncompact, a longstanding conjecture of Yau predicts that $X$ must be biholomorphic to $\CC^n$ \cite{Y}. The corresponding statement under the stronger assumption of positive sectional curvature was previously conjectured by Green and Wu \cite{GW2}.

Yau's conjecture has been settled under additional hypotheses, typically involving volume growth and curvature decay / finiteness of certain curvature integrals \cite{GW, MSY, MY, M, Mok89, Mok90, CT}. In \cite{L1}, Liu proved that if $X$ is as above and has maximal volume growth, then it is indeed biholomorphic to $\CC^n$. Subsequently Lee and Tam \cite{LT}, building on earlier work by Chau, Tam and others (cf. \cite{CT} and references therein), proved the same result using the K\"ahler-Ricci flow. Interestingly, the other extreme case of {\it minimal} volume growth has also been settled, at least in complex dimension $2$: In \cite{CZ1}, Chen-Zhu proved that if $X$ is a complete noncompact Kahler manifold with $BK>0$ and $p \in X$, then there exists $C>0$ such that $$Vol(B(p,r)) \ge Cr^n,$$ where $n =dim_{\CC}X$.  In a later work \cite{CZ4}, they proved that if $X$ has positive {\it sectional} curvature and minimal volume growth, then it is biholomorphic to an affine algebraic variety. If $dim_{\CC}X=2$, it follows that $X$ is biholomorphic to $\CC^2$ by a classical result of Ramanujam \cite{R}. In a different direction, Chen-Zhu \cite{CZ2} proved that if $X$ has positive and {\em bounded} sectional curvature, and satisfies $$\int_X \Rc_\omega^n < \infty,$$ then $X$ is biholomorphic to a quasi-projective variety (cf. \cite{Mok90, To}). Again, if $\dim_{\CC}X =2$, $M$ is biholomorphic to $\CC^2$.

 Yet another conjecture of Yau \cite{Yau2}, refined by Yang \cite{Yang}, states that if $(X^n,\omega)$ has non-negative bisectional curvature and $o\in X$ is a given point, then there exists a constant $C>0$ such that for any $r>0$ and $k=1,\cdots,n$, $$r^{2k-2n}\int_{B(o,r)} \Rc_\omega^k\wedge \omega^{n-k}\leq C. $$
These are higher-dimensional versions of the classical Cohn-Vossen inequality \cite{CV}. If $X^n$ has positive sectional curvature, then the $k=1$ case of the conjecture follows from the following a priori estimate of Petrunin \cite{Pet} and rescaling: There exists a dimensional constant $C(n)$ such that  for any $o\in X$, the scalar curvature $S_\omega$ satisfies $$\int_{B(o,1)}S_\omega \omega^n < C.$$ Using our weight function and the method of \cite{CZ2} we can establish the remaining case of Yau's Cohn-Vossen type conjecture for K\"ahler surfaces with positive sectional curvature.
\begin{thm}\label{thm:chern-weil}
Let $(X, \omega)$ be a complete noncompact K\"ahler surface with positive sectional curvature. Then $$\int_X \Rc_\omega^2 < \infty.$$
\end{thm}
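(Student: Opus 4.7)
The plan is to construct a Lipschitz continuous plurisubharmonic function $\psi$ on $X$ with finite total Monge-Amp\`ere mass, and to use it as a weight in an integration by parts argument bounding $\int_X \Rc_\omega^2$. The function $\psi$ will be obtained by solving a complex Monge-Amp\`ere equation on a pseudoconvex exhaustion of $X$, and the final estimate then proceeds along the lines of \cite{CZ2}, with Petrunin's a priori scalar curvature estimate supplying the crucial input derived from the positive sectional curvature assumption.

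For the construction, fix a smooth exhaustion $\Omega_1 \Subset \Omega_2 \Subset \cdots$ of $X$ by relatively compact strictly pseudoconvex domains. On each $\Omega_j$, solve the Dirichlet problem
\[
(\ddbar \psi_j)^2 = f_j\,\omega^2, \qquad \psi_j\big|_{\d \Omega_j} = 0,
\]
where $f_j \ge 0$ is chosen so that $\int_{\Omega_j} f_j\,\omega^2$ is uniformly bounded in $j$, with the specific choice dictated by the need to dominate $\Rc_\omega^2$ via integration by parts in the final step. Continuous plurisubharmonic solutions $\psi_j$ exist by Bedford--Taylor theory, and comparison with plurisubharmonic barriers yields uniform $C^0$ bounds. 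The central technical step is a uniform Lipschitz estimate for $\psi_j$: this is obtained via a maximum-principle computation on $|\nabla \psi_j|^2$ in which the positive sectional curvature of $\omega$ contributes with the correct sign. A diagonal/subsequential limit then produces a globally defined Lipschitz plurisubharmonic function $\psi$ on $X$ with $\int_X (\ddbar \psi)^2 \le C$.

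With $\psi$ available, introduce a family of cutoff functions $\chi_t$ depending only on $\psi$ and supported in the sublevel set $\{\psi < t\}$; the Lipschitz bound on $\psi$ ensures that $|d\chi_t|$ is uniformly bounded independently of $t$. Apply Stokes' theorem to $\int_X \chi_t\,\Rc_\omega^2$, moving derivatives onto $\chi_t$ using the fact that $\Rc_\omega$ is closed and locally $\ddbar$-exact, and compare against $(\ddbar \psi)^2$. The resulting expression is controlled by three ingredients: the Monge-Amp\`ere mass $\int_X (\ddbar \psi)^2$, which is finite by construction; a contribution involving the scalar curvature on the support of $d\chi_t$, which is controlled by Petrunin's estimate; and boundary terms that vanish in the limit $t \to \infty$. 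This yields $\int_X \Rc_\omega^2 < \infty$.

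The principal obstacle is the uniform Lipschitz bound on $\psi_j$: without any a priori upper bound on sectional or Ricci curvature, standard Blocki-type gradient estimates are unavailable, so a new argument exploiting only one-sided positivity of curvature is required. A secondary difficulty is to choose $f_j$ precisely so that the integration by parts closes cleanly, requiring only Petrunin's linear scalar curvature estimate as input rather than quadratic quantities such as $\int S_\omega^2\,\omega^2$, which are not a priori available under the hypotheses of Theorem~\ref{thm:chern-weil}.
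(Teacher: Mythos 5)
Your overall strategy (Lipschitz psh weight of finite Monge--Amp\`ere mass, obtained from a Dirichlet problem on a pseudoconvex exhaustion, then a cutoff/Stokes argument in the spirit of Chen--Zhu) is the same skeleton as the paper's, but both of the load-bearing steps are missing. For the weight: you explicitly leave the uniform Lipschitz bound as an open obstacle and leave $f_j$ unspecified, whereas this is exactly where the work lies. The paper takes the right-hand side to be $e^{-F(\rho)}\omega^n$, with $\rho$ the Green--Wu uniformly Lipschitz strictly convex exhaustion and $F$ calibrated so that (i) $e^{-F(\rho)}\omega^n<\tfrac12(\ddbar\rho)^n$ and (ii) $\n\nabla F(\rho)\n e^{-F(\rho)/n}\le \tfrac n4\La$, where $\La(x)$ is the pointwise infimum of bisectional curvature; with boundary value $\rho$ (not $0$). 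Then Blocki's interior maximum-principle computation closes precisely because the curvature term $n\La e^{F(\rho)/n}$ beats $2F'(\rho)$, and the boundary gradient is trapped by the comparison $\rho\le u_\nu\le R_\nu$. Without a choice of data of this kind, ``positive curvature contributes with the correct sign'' is not enough: the differentiated right-hand side enters the computation with the wrong sign and must be dominated by the curvature term.

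The more serious gap is the final step. There is no pointwise or cohomological comparison between $\Ric(\omega)$ and $\ddbar\psi$, and ``locally $\ddbar$-exact'' does not license a global Stokes argument; even granting a global potential $v$ with $\Ric(\omega)=\ddbar v$, the cutoff error terms involve $\int_{\mathrm{supp}\, d\chi_t}\Rc_\omega\wedge\omega$ weighted by $v$, and Petrunin's estimate only gives growth of order $a^2$ on $B(o,a)$ while your $|d\chi_t|$ is merely bounded (the paper's cutoffs decay like $a^{-1}$, and even then $a^{-1}\cdot a^2\to\infty$), so the error terms do not vanish and the scheme is essentially circular: you would be estimating the Ricci mass by itself. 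The paper's mechanism is different and is the actual point: after regularizing the Lipschitz weight $\phi$ by the heat flow ($u(\cdot,t)$, $\psi=u(\cdot,1)$, with Ni--Tam/Chen--Zhu estimates), it uses H\"ormander's $L^2$ method with weight $e^{-q\psi}$ to produce a holomorphic section $s\in H^0(X,K_X)$ with uniform bounds on $\int\n s\n_{qu}^2\omega^2$ and $\int\n\nabla_{qu}s\n_{qu}^2\omega^2$. The closed positive current $\zeta_\ep(s)=\ddbar\log(\n s\n_{q\phi}^2+\ep^2)+q\ddbar\phi$ then dominates $\frac{\n s\n^2_{q\phi}}{\n s\n^2_{q\phi}+\ep^2}\Ric(\omega)$, and --- crucially --- it differs from $q\ddbar\phi$ by $\ddbar$ of a globally defined function, so a Bedford--Taylor integration by parts against the cutoffs yields $\int_X\zeta_\ep(s)^2\le q^2\int_X(\ddbar\phi)^2<\infty$, with the $O(a^{-1})$ error terms controlled exactly by the $L^2$ bounds on $s$ and $\nabla s$; Fatou then gives $\int_X\Rc_\omega^2<\infty$. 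Petrunin's inequality plays no role in this proof (it is only relevant to the $k=1$ Cohn--Vossen case), and without the holomorphic section --- or some equally global, quantitatively controlled potential tying $\Ric(\omega)$ to $\ddbar\phi$ --- your integration by parts does not close.
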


Combining Theorem \ref{thm:chern-weil}  with the result of Chen and Zhu mentioned above \cite{CZ2} we obtain the following special case of the conjectures of Green-Wu and Yau.

\begin{thm} \label{thm:sec-bounded}
Let $(X, \omega)$ be a complete noncompact K\"ahler surface with positive and bounded sectional curvature. Then $X$ is biholomorphic to $\CC^2$.
\end{thm}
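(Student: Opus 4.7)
The plan is to derive Theorem \ref{thm:sec-bounded} as an essentially immediate corollary of Theorem \ref{thm:chern-weil} combined with the theorem of Chen-Zhu \cite{CZ2} recalled in the introduction. So the proposal is really just a two-step citation chain.

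First, I would observe that on a K\"ahler manifold positive sectional curvature implies positive holomorphic bisectional curvature, and in particular is strictly stronger than the hypothesis of Theorem \ref{thm:chern-weil}. Hence $(X,\omega)$ satisfies the hypotheses of that theorem, and we obtain the crucial integral bound
\[
\int_X \Rc_\omega^2 < \infty.
\]

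Second, I would invoke the result of Chen-Zhu \cite{CZ2}: a complete noncompact K\"ahler manifold with positive \emph{bounded} sectional curvature satisfying $\int_X \Rc_\omega^n < \infty$ is biholomorphic to a quasi-projective variety, and in the surface case $n=2$ this already upgrades to a biholomorphism with $\CC^2$ (as noted in the introduction, relying on Mok's work and Ramanujam's criterion). Our $(X,\omega)$ meets all these hypotheses — positivity and boundedness of sectional curvature are given, and the integral bound comes from the previous step — so we conclude $X \cong \CC^2$.

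There is essentially no obstacle to this final deduction; it is a direct concatenation of two black boxes. The entire substantive difficulty of the paper is contained in Theorem \ref{thm:chern-weil}, which in turn rests on the construction of a Lipschitz plurisubharmonic weight function with finite Monge-Amp\`ere mass obtained by solving a complex Monge-Amp\`ere equation, as advertised in the abstract. In particular, the boundedness assumption on sectional curvature is not used in proving Theorem \ref{thm:chern-weil} itself; it enters only at this last step, through the Chen-Zhu theorem.
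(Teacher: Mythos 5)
Your proposal is correct and is exactly the paper's own argument: Theorem \ref{thm:chern-weil} gives $\int_X \Rc_\omega^2 < \infty$, and the Chen--Zhu theorem of \cite{CZ2} (positive bounded sectional curvature plus finite $\int_X \Rc_\omega^n$ implies quasi-projectivity, hence $\CC^2$ in complex dimension two) finishes the proof, with the bounded-curvature hypothesis entering only at this last step. One small slip: the hypothesis of Theorem \ref{thm:chern-weil} is positive \emph{sectional} (not merely bisectional) curvature, so your data match it exactly rather than being ``strictly stronger''; this does not affect the argument.
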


Note that the positive sectional curvature hypothesis implies that $X$ is diffeomorphic to $\RR^4$, by  the Gromoll-Meyer theorem \cite{GM}. It also implies that $X$ is a Stein manifold equipped with a smooth strictly plurisubharmonic (psh) exhaustion function that is uniformly Lipschitz, by the work of Green-Wu \cite{GW}. Neither of these facts is known to hold under the weaker assumption $BK_\omega>0$.
\vspace{2mm}


The key new technical input in this paper, which may be of independent interest, is the following construction of a uniformly Lipschitz psh weight function with finite Monge-Amp\`ere mass.

\begin{thm}\label{thm:weight}
Let $(X^n,\omega)$ be a complete, non-compact $n$-dimensional K\"ahler manifold with a smooth exhaustion function $\rho:X\rightarrow \RR$ with uniformly bounded gradient. Suppose $BK_\omega >0$. Then there exists a uniformly Lipschitz, strictly plurisubharmonic function $\phi$ on $X$ such that $$\int_X(\ddbar \phi)^n < \infty.$$ In particular, in view of the theorem of Green and Wu \cite{GW}, such a $\phi$ exists on any complete, non-compact K\"ahler manifold with positive sectional curvature.
\end{thm}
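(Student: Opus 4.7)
The plan is to construct $\phi$ as a limit of solutions to Dirichlet problems for complex Monge-Amp\`ere equations on an exhaustion of $X$, using the positive bisectional curvature to obtain a uniform gradient estimate. The guiding model on $\CC^n$ is $\log(1+|z|^2)$: a uniformly Lipschitz, strictly plurisubharmonic function whose Monge-Amp\`ere measure is the finite-mass Fubini-Study volume form. The aim is to produce a function of this type on $X$.

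For regular values $k$, the sublevel set $\Omega_k=\{\rho<k\}$ is smoothly bounded and relatively compact. After perturbing $\rho$ by a small multiple of a locally defined (or globally averaged) strictly plurisubharmonic function --- which is available under $BK_\omega>0$ via Greene-Wu / heat-kernel averaging arguments --- we may assume that the $\Omega_k$ are strictly pseudoconvex with respect to $\omega$. Fix a smooth, strictly positive $F$ on $X$ with finite total mass $\int_X F\omega^n$ (for instance, $F$ compactly supported and strictly positive on its support). On each $\Omega_k$ I would solve the Dirichlet problem
\[
(\sqrt{-1}\partial\bar\partial\phi_k)^n = F\,\omega^n,\qquad \phi_k\big|_{\partial\Omega_k}=0,
\]
obtaining, by Caffarelli-Kohn-Nirenberg-Spruck theory adapted to the Kähler/Stein setting, a smooth strictly plurisubharmonic solution $\phi_k\le 0$.

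The crucial a priori bounds, to be established uniformly in $k$, are a $C^0$ estimate $-C\le\phi_k\le 0$ and a Lipschitz estimate $|\nabla\phi_k|_\omega\le C$. The former follows from pluripotential comparison with barriers built from $\rho$ and the total mass of $F$. The latter is the heart of the argument: I would apply the maximum principle to an auxiliary function of the form $Q=\log|\nabla\phi_k|^2_\omega - A\phi_k$ for $A$ large, and compute $\Delta_{\mathrm{MA}}Q$ via a Bochner-Kodaira identity using the equation. The resulting differential inequality contains an error term involving the holomorphic bisectional curvature of $\omega$ contracted against $\partial\phi_k$; the hypothesis $BK_\omega>0$ gives this term a sign compatible with absorption into the favourable term. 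Because $\phi_k|_{\partial\Omega_k}=0$, the boundary gradient is entirely normal and is controlled by a standard barrier constructed from $\rho$, which uses crucially the uniform bound on $|\nabla\rho|$.

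By Arzel\`a-Ascoli a subsequence $\phi_{k_j}$ converges locally uniformly on $X$ to a uniformly Lipschitz plurisubharmonic function $\phi$; Bedford-Taylor continuity of the Monge-Amp\`ere operator promotes the equation to $(\sqrt{-1}\partial\bar\partial\phi)^n=F\omega^n$, whence $\int_X(\sqrt{-1}\partial\bar\partial\phi)^n = \int_X F\omega^n<\infty$ and strict plurisubharmonicity is inherited from $F>0$. The principal obstacle is the uniform Lipschitz estimate: making $BK_\omega>0$ dominate the Bochner error terms uniformly across the exhaustion demands a careful balance among the auxiliary function $Q$, the constant $A$, and the boundary barrier, with the bounded-gradient hypothesis on $\rho$ playing the essential role of transferring local control at $\partial\Omega_k$ into a global Lipschitz bound independent of $k$.
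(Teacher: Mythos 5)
Your overall skeleton (exhaust by sublevel sets of $\rho$, solve Dirichlet problems for the complex Monge--Amp\`ere equation with a finite-mass right hand side, prove a gradient estimate via the maximum principle using $BK_\omega>0$, control the boundary gradient with a barrier built from $\rho$, then pass to the limit by Arzel\`a--Ascoli and Bedford--Taylor) is the same as the paper's. However, there is a genuine gap at the heart of the argument: you take a generic finite-mass density $F$ (even compactly supported), whereas the uniform interior gradient estimate requires the right hand side to be \emph{quantitatively adapted to the curvature decay}. When you differentiate the equation in the Bochner/Blocki computation, the bad term is of size $|\nabla \log F|\,|\nabla\phi_k|$, while the favourable term produced by $BK_\omega>0$ is only of size $\La(x)\,|\nabla\phi_k|^2\,\mathrm{Tr}_{\omega'}\omega \geq n\La(x)\big(\omega^n/F\omega^n\big)^{1/n}|\nabla\phi_k|^2$, where $\La(x)>0$ may decay to zero at infinity (positivity of bisectional curvature carries no uniform lower bound). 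Absorption is therefore not a matter of sign alone: one must \emph{choose} the density so that its logarithmic derivative is dominated by $\La$ times the appropriate power of the density, and also so that the density is dominated by $(\ddbar\rho)^n$ in order to run the comparison/barrier argument at the boundary. Constructing such a density is exactly the content of the paper's first lemma (the function $e^{-F(\rho)}$ with $F'$ controlled by $\inf_{S_r}\La$ and $e^{-F(\rho)}\omega^n<\tfrac12(\ddbar\rho)^n$); your proposal asserts the absorption but supplies no mechanism for it. A compactly supported $F$ is worse still: the equation degenerates off a compact set, the solutions are not smooth there, the maximum-principle computation does not apply, and the limit cannot be strictly plurisubharmonic.

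A second, related problem is your claimed uniform bound $-C\leq\phi_k\leq 0$ with zero boundary data. Finite total mass does not give this: on $\CC^n$ the functions $\log(1+|z|^2)-\log(1+k^2)$ solve a Monge--Amp\`ere equation with fixed finite-mass right hand side and vanish on $|z|=k$, yet they tend to $-\infty$ locally uniformly. Consequently your auxiliary function $Q=\log|\nabla\phi_k|^2-A\phi_k$ cannot yield a $k$-independent bound, since transferring the estimate from the maximum point of $Q$ requires a uniform bound on the oscillation of $\phi_k$, which fails (note also that $\Delta'(-A\phi_k)=-An$ contributes nothing favourable). The paper sidesteps both issues by prescribing boundary values $u_\nu=\rho=R_\nu$ on $S_\nu$, using the comparison principle (via the condition $e^{-F(\rho)}\omega^n<\tfrac12(\ddbar\rho)^n$) to squeeze $\rho\leq u_\nu\leq R_\nu$ and hence pin the normal derivative at the boundary, applying the maximum principle \emph{directly} to $|\nabla u_\nu|^2$ (no $-A\phi$ term, precisely because the curvature-adapted density makes the interior maximum impossible), and only then normalising $\phi_\nu=u_\nu-u_\nu(o)$ so that the Lipschitz bound alone furnishes local $C^0$ control. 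To repair your argument you would need to import both of these ingredients: the curvature- and $\rho$-adapted construction of the density, and either growing boundary data with the comparison principle or a normalisation replacing the false uniform $C^0$ estimate.
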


Note that there is no dimension restriction in the above theorem. \vspace{2mm}


The present work originated from an attempt to use non-smooth weights to resolve Yau’s uniformisation conjecture for Kähler surfaces with positive sectional curvature, without imposing an upper bound. Although that objective remains unfulfilled, a finite stratification result for such manifolds will be presented in a forthcoming paper now in preparation.  \vspace{5mm}

\section{Construction of a weight function with bounded mass}

We prove Theorem \ref{thm:weight} in this section. Without loss of generality, we may also assume that $\rho \geq 0$, and that $|\nabla \rho|_\omega \leq 1$. We fix a point $o\in \rho^{-1}(0)$. Note that in the case of positive sectional curvature, by Green and Wu's work one can choose $\rho$ to be strictly convex, and hence $o$ be the unique point in $\rho^{-1}(0)$. Next, we set $$B_R:= \{x\in M~|~ \rho(x) < R\},$$ and $S_R := \partial B_R$. Since $\rho$ is a strictly psh exhaustion function, $B_R$ is a strictly pseudoconvex compact subset of $X$. Let $R_\nu\rightarrow\infty$ be a sequence of regular values of $\rho$. Set $B_\nu = B_{R_\nu}$ and $S_\nu = \partial B_\nu$. The main idea is to solve a complex Monge-Amp\`ere equation on $B_\nu$ with a rapidly decaying right hand side and take a limit  of the solutions. The key point is to obtain apriori gradient estimates so as to obtain a Lipschitz function in the limit. We first need to construct a rapidly decaying function at infinity that is smaller in some precise sense than the curvature decay at infinity.  For any point $p\in M$ we let $$\La(x) := \inf_{u,v \in T_x M} \frac{\mathrm{Rm}(u,v,v,u) + \mathrm{Rm}(u,Jv,Jv,u)}{|u\wedge v|^2}.$$ By our hypothesis, $\La(p) >0$ for all $p$. We then have the following elementary observation.

\begin{lem}
There exists a smooth exhaustion function $F:[0,\infty) \rightarrow\RR$ and a constant $C>0$ such that the following properties hold:
\begin{enumerate}
\item $$\int_X e^{-F(\rho)/n}\omega^n < \infty,$$
\item $$e^{-F(\rho)}\omega^n < \frac{1}{2}(\ddbar \rho)^n,$$ and
\item For every $x\in X$, $$\n \nabla F(\rho)\n _\omega e^{-F(\rho)/n}(x) \leq \frac{n}{4}\La(x)$$
\end{enumerate}
\end{lem}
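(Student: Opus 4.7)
The strategy is to reduce to a one-dimensional construction by the substitution $\alpha(r) := e^{-F(r)/n}$ and then to produce $\alpha$ explicitly as a Hopf--Lax-type envelope. Introduce the monotone auxiliaries
\[
\La^*(r) := \inf_{\bar B_r}\La,\quad D^*(r) := \inf_{\bar B_r}\frac{(\ddbar\rho)^n}{\omega^n},\quad V(r) := \int_{B_r}\omega^n,
\]
each finite for every $r$: positivity of $\La^*,D^*$ follows from continuity and strict positivity of the integrands on the compact set $\bar B_r$, and $V(r)<\infty$ since $B_r$ is relatively compact. Since $F'\alpha = -n\alpha'$ and the coarea formula gives $\int_X g(\rho)\omega^n = \int_0^\infty g(r) V'(r)\,dr$, conditions (1)--(3), together with the pointwise bounds $\La\geq \La^*\circ\rho$ and $(\ddbar\rho)^n/\omega^n \geq D^*\circ\rho$, are implied once we produce a smooth positive $\alpha:[0,\infty)\to(0,\infty)$ with $\alpha(r)\to 0$ and
\begin{enumerate}
\item[(a)] $\int_0^\infty \alpha(r)V'(r)\,dr < \infty$;
\item[(b)] $\alpha(r)^n \leq \tfrac12 D^*(r)$;
\item[(c)] $|\alpha'(r)| \leq \tfrac14 \La^*(r)$ a.e.
\end{enumerate}

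To build $\alpha$, choose smooth positive non-increasing slowly-varying (i.e., $f(r-1)\leq 2f(r)$) functions on $[0,\infty)$: $\bar M\leq \tfrac14(D^*/2)^{1/n}$ with $\bar M(r)\leq (V(r)+1)^{-2}$ (so $\bar M\to 0$ and $\int \bar M V'\,dr<\infty$), and $\bar\la\leq \La^*/8$. Define the envelope
\[
\alpha_0(r) := \inf_{s\geq r}\Bigl\{\bar M(s) + \tfrac14\int_r^s \bar\la(t)\,dt\Bigr\}.
\]
Direct verification shows $\alpha_0$ is non-increasing (case-split on whether a near-minimizer $s_\epsilon$ for $\alpha_0(r_1)$ lies before or after $r_2$), bounded above by $\bar M$ (take $s=r$, so $\alpha_0\to 0$), positive (the integrand is strictly positive on $[r,\infty)$ and tends to a positive or infinite limit as $s\to\infty$), and satisfies $\alpha_0(r_1)-\alpha_0(r_2) \leq \tfrac14\int_{r_1}^{r_2}\bar\la\,dt$ for $r_1<r_2$ (from the splitting $\int_{r_1}^s = \int_{r_1}^{r_2}+\int_{r_2}^s$), giving $-\alpha_0'\leq \bar\la/4$ almost everywhere. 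Convolving $\alpha_0$ against a symmetric smooth bump $\phi_\epsilon$ of width $\epsilon\leq 1$ produces a smooth $\tilde\alpha$ with $\tilde\alpha\leq 2\bar M \leq \tfrac12(D^*/2)^{1/n}$ and $|\tilde\alpha'(r)|\leq \bar\la(r-1)/4 \leq \bar\la(r)/2 \leq \La^*(r)/16$, so (a)--(c) all hold with margin. Set $F(r) := -n\log\tilde\alpha(r)$.

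The main obstacle is the tension between (b) and (c): the upper envelope $(D^*/2)^{1/n}$ may descend faster than $\bar\la/4$ allows, so one cannot take $\alpha = \bar M$ directly. The envelope handles this by pre-emptively pulling $\alpha(r)$ strictly below $\bar M(r)$ whenever $\bar M$ is scheduled to drop faster downstream than $\bar\la/4$ can compensate for. The other technical point is that mollification must not spoil (c), which is why the slowly-varying assumption is built into $\bar\la$ from the start. Everything else is routine given the strict positivity of $\La$ and of $(\ddbar\rho)^n/\omega^n$ on the compact sets $\bar B_r$ and the local finiteness of $V$.
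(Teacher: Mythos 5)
Your reduction to a one--dimensional problem is sound: the monotone minorants $\La^*(r)=\inf_{\bar B_r}\La$ and $D^*(r)=\inf_{\bar B_r}(\ddbar\rho)^n/\omega^n$ are positive by compactness of sublevel sets, the bookkeeping with $V$ handles (1), the identity $F'\alpha=-n\alpha'$ together with $|\nabla\rho|\le 1$ correctly translates (2)--(3) into (b)--(c), and the Hopf--Lax envelope $\alpha_0$ does have the stated monotonicity, positivity and $|\alpha_0'|\le\bar\la/4$ properties. In spirit this parallels the paper's construction, which takes $e^{-F(\rho)/n}=\int_\rho^\infty G$ for a smooth positive $G$ dominated by both $-g'$ and $\tfrac14\inf_{S_r}\La$, so that the Lipschitz bound is built in and no envelope or smoothing is needed. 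However, the very first step of your construction can fail: you ask for $\bar M$ and $\bar\la$ to be \emph{slowly varying}, $f(r-1)\le 2f(r)$, while also lying below $\tfrac14(D^*/2)^{1/n}$ and $\La^*/8$ respectively. Slow variation forces $f(r)\ge \tfrac12 f(r-1)$, hence $f(r)\ge c\,2^{-r}$, i.e.\ at most exponential decay; but nothing in the hypotheses (a strictly psh exhaustion with bounded gradient and $BK_\omega>0$) prevents the Monge--Amp\`ere density of $\rho$ or the curvature function $\La$ from decaying super-exponentially along $\rho$. If, say, $D^*(r)\le e^{-2nr^2}$, then no positive slowly varying $\bar M\le\tfrac14(D^*/2)^{1/n}$ exists, and likewise for $\bar\la$ versus $\La^*$. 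Since slow variation is precisely what you invoke so that mollification does not spoil the bounds (the inequalities $\tilde\alpha\le 2\bar M$ and $|\tilde\alpha'(r)|\le\bar\la(r-1)/4\le\bar\la(r)/2$), this is a genuine gap in the proof as written, not just a cosmetic one.

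The gap is repairable without slow variation: keep $\bar M,\bar\la$ merely positive and non-increasing, and smooth $\alpha_0$ one-sidedly, e.g.\ $\tilde\alpha(r)=\int\alpha_0(r+1-s)\phi_\epsilon(s)\,ds$ with $\epsilon<1$, or equivalently mollify with a kernel supported in $[-1,0]$. Monotonicity of $\alpha_0$ then gives $\tilde\alpha(r)\le\alpha_0(r)\le\bar M(r)$, and monotonicity of $\bar\la$ gives $|\tilde\alpha'(r)|\le\tfrac14\bar\la(r+1-\epsilon)\le\tfrac14\bar\la(r)$, after which (a)--(c), and hence (1)--(3) with $F=-n\log\tilde\alpha$, follow exactly as you argue (your $(V+1)^{-2}$ device for (1) is fine and even avoids the volume-growth input the paper implicitly uses for its $e^{-d_\omega(o,x)}$ bound). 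Alternatively, note that the paper's tail-integral ansatz resolves the tension you describe automatically: with $G\le\min(-g',\tfrac14\inf_{S_r}\La)$ one has $\int_r^\infty G\le g(r)$ no matter how fast $g$ drops, while the derivative is $-G$, already capped by the curvature bound.
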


\begin{proof}
Fix a point $o\in X$. Let $g:[0,\infty)\rightarrow (0,\infty)$ be a smooth strictly decreasing function satisfying $$g(r) < \Big( \inf_{x\in S_r}\min\Big(e^{-d_\omega(o,x)}, \frac{(\ddbar\rho)^n}{2\omega^n}(x)\Big)\Big)^{1/n}.$$ Note that by the exhaustive nature of $\rho$, $S_r$ is non-empty for each $r\geq 0$. Moreover, the right-hand-side is some continuous, strictly positive function in $r$, and hence this is always possible. One can easily see that $\displaystyle \lim_{r\rightarrow\infty}g(r) = 0.$ We now define $$F(\rho) = -n\ln\Big(c - \int_{0}^\rho G(s)\,ds\Big), $$ where $$c = \int_0^\infty G(s)\,ds,$$ and we choose a smooth, strictly positive function $G$ so that
\begin{itemize}
\item $G(s) < -g'(s)$ for all $s$, and
\item $G(r) < \frac{1}{4}\inf_{x\in S_r}\La(x).$
\end{itemize}
From the first condition it follows that $$\int_r^\infty G(s)\,ds < g(r).$$ In particular $G$ is integrable with the total integral $c< g(0)$. It is now easy to check that $F(\rho)$ satisfies all the required properties.
\end{proof}

Next, by \cite{BL}, there exists a strictly psh $u_\nu\in C^\infty(B_\nu)\cap C^0(\overline{B_\nu})$ such that $$\begin{cases} (\ddbar u_\nu)^n =  e^{-F(\rho)}\omega^n\\ u_\nu\Big|_{S_\nu} = \rho = R_\nu.\end{cases}$$ By the comparison principle and our choice of $F$ we have that $u_\nu\geq \rho$. The key point is the following:

 \begin{lem}
 For all $\nu$, $$\sup_{B_\nu}|\nabla u_\nu|_\omega^2 \leq1.$$
 \end{lem}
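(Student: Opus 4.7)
The plan is to combine a boundary gradient estimate with an interior maximum-principle argument.

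For the boundary, note that $v := u_\nu - \rho \geq 0$ on $B_\nu$ vanishes on $S_\nu$, so the tangential gradient of $u_\nu$ agrees with that of $\rho$ along $S_\nu$. Since $u_\nu$ is plurisubharmonic, the maximum principle gives $u_\nu \leq R_\nu$ on $B_\nu$, whence the outward normal derivative satisfies $0 \leq N(u_\nu) \leq N(\rho)$ on $S_\nu$. Summing the tangential and normal components yields $|\nabla u_\nu|_\omega^2 \leq |\nabla \rho|_\omega^2 \leq 1$ on $S_\nu$.

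For the interior estimate, suppose $H := |\nabla u_\nu|_\omega^2$ attains its supremum at an interior point $p_0 \in B_\nu$. I choose $\omega$-normal holomorphic coordinates at $p_0$ simultaneously diagonalizing the Hessian, so that $g_{i\bar{j}}(p_0) = \delta_{ij}$ and $(u_\nu)_{i\bar{j}}(p_0) = \lambda_i \delta_{ij}$. A Bochner-type computation of $\Delta_{\omega_{u_\nu}} H$ at $p_0$ produces three kinds of terms: a bisectional curvature piece $\sum_{\alpha,i,j}\lambda_\alpha^{-1}R_{i\bar{j}\alpha\bar{\alpha}}(u_\nu)_i(u_\nu)_{\bar{j}}$; non-negative Hessian-squared pieces which are discarded; and a term of the form $-\langle\nabla F(\rho),\nabla u_\nu\rangle_\omega$ obtained by differentiating the Monge-Amp\`ere equation $\log\det(u_\nu)_{i\bar{j}} = -F(\rho) + \log\det g_{i\bar{j}}$ once in a holomorphic direction. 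By the definition of $\Lambda$, the curvature piece is at least $c\,\Lambda(p_0)\, H\,\mathrm{tr}_{\omega_{u_\nu}}\omega$ for an absolute constant $c>0$; applying AM-GM to the Monge-Amp\`ere equation gives $\mathrm{tr}_{\omega_{u_\nu}}\omega \geq n\, e^{F(\rho)/n}$.

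Combining these with $\Delta_{\omega_{u_\nu}} H(p_0) \leq 0$ and Cauchy-Schwarz on the gradient term gives, at $p_0$, an inequality of the form
\[
c\, n\,\Lambda(p_0)\, H\, e^{F(\rho)/n} \leq |\nabla F(\rho)|_\omega \sqrt{H}.
\]
Dividing by $\sqrt{H}$ and invoking property (3) of the preceding lemma, $|\nabla F(\rho)|_\omega e^{-F(\rho)/n} \leq \tfrac{n}{4}\Lambda$, yields $H(p_0) \leq 1$. Together with the boundary estimate this proves $\sup_{B_\nu}|\nabla u_\nu|_\omega^2 \leq 1$.

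The principal obstacle is carrying out the Bochner calculation cleanly and tracking constants so that the bound lands at exactly $1$: the Laplacian is taken with respect to the induced K\"ahler form $\omega_{u_\nu}$, while the gradient, the curvature tensor, and the coordinates come from the ambient metric $\omega$, so one must carefully manipulate the curvature of $g$ against the weights $\lambda_\alpha^{-1}$ and use the Monge-Amp\`ere equation to convert holomorphic derivatives of $\log\det(u_\nu)_{i\bar j}$ into derivatives of $F(\rho)$. A secondary point is that the boundary estimate uses $u_\nu \in C^1(\overline{B_\nu})$, which follows from the Caffarelli-Kohn-Nirenberg-Spruck regularity theorem for the complex Monge-Amp\`ere equation on smooth strictly pseudoconvex domains.
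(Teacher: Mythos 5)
Your proposal is correct and follows essentially the same route as the paper: an interior maximum-principle/Bochner argument for $|\nabla u_\nu|_\omega^2$ with respect to $\Delta_{\omega_{u_\nu}}$, using the differentiated Monge--Amp\`ere equation, the lower bound of the curvature term via $\Lambda$, AM--GM to get $\mathrm{tr}_{\omega_{u_\nu}}\omega \geq n e^{F(\rho)/n}$, and property (3) of the preceding lemma, combined with the barrier argument $\rho \leq u_\nu \leq R_\nu$ at the boundary where the tangential gradient vanishes. The constants indeed work out (the paper's normalisation of $\Lambda$ gives the curvature term with constant $1$ and the gradient term with factor $2$, so the slack $\tfrac{n}{4}$ in property (3) yields an interior bound even $\leq \tfrac14$), and your appeal to boundary regularity (CKNS/Guan--Li) matches the paper's use of \cite{BL}.
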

\begin{proof} We first use the maximum principle (cf. Blocki \cite{B}) to reduce the estimate to the boundary. For ease of notation, we will drop the subscript $\nu$.  We also let $\omega' := \ddbar u$ and denote all quantities (such as the Laplacian) associated to $\omega'$ with a prime. Suppose $x_0\in A_R$ is a maxima for $|\nabla u|^2$. We choose normal coordinates for $\omega$ so that $\ddbar u$ is diagonal with entries $(\la_1,\cdots,\la_n).$ We also denote by $h$ the endomorphism of $T^{1,0}X$ given by $h^{i}_k = g^{i\bar j}u_{k\bar j}$. We now compute at $p$,
  \begin{align*}
 \Delta'|\nabla u|^2 &= u^{i\bar j}\partial_i\partial_{\bar j}(g^{k\bar l}u_k u_{\bar l})\\
 &= u^{i\bar j}R_{i\bar j k\bar l}g^{k\bar q}g^{p\bar l}u_k u_{\bar l} + \mathrm{Tr}(h) + u^{i\bar j}g^{k\bar l}u_{ik}u_{\bar j \bar l} + u^{i\bar j}g^{k\bar l}u_{k \bar j;i } u_{\bar l} + u^{i\bar j}g^{k\bar l}u_{k}u_{\bar l\bar j;i}.
  \end{align*}
  Note that the final two terms involve the third derivatives of $u$. One can use the equation to simplify these terms. Indeed, taking log and differentiating the equation with respect to $\partial_k$ we obtain $$u^{i\bar j}u_{i\bar j;k} = -\partial_k F(\rho) + g^{p\bar q}g_{p\bar q;k} = -\partial_k F(\rho)$$ since we are working with normal coordinates for $\omega$ at $p$. We can also similarly write a formula for the $\bar l$-derivative. Note also that $u_{i\bar j;k} = u_{k\bar j;i}$ and so we get that $$\Delta'|\nabla u|^2 =  u^{i\bar j}R_{i\bar j k\bar l}g^{k\bar q}g^{p\bar l}u_k u_{\bar l} + \mathrm{Tr}(h) + u^{i\bar j}g^{k\bar l}u_{ik}u_{\bar j \bar l} - 2\langle \nabla u,\nabla F(\rho)\rangle_\omega.$$ In normal coordinates the third term takes the forms $$u^{i\bar j}g^{k\bar l}u_{ik}u_{\bar j \bar l} = \sum_{i,k}\frac{|u_{ik}|^2}{\la_i} \geq 0.$$ So finally we have
  \begin{align*}
  \Delta'|\nabla u|^2 &\geq  u^{i\bar j}R_{i\bar j k\bar l}g^{k\bar q}g^{p\bar l}u_k u_{\bar l}  -  2|\nabla F(\rho)|\nabla u|\\
  &\geq \La|\nabla u|^2 \mathrm{Tr}(h^{-1}) - 2|\nabla F(\rho)|\nabla u|\\
  &\geq \Big(n\La e^{F(\rho)/n} - 2\frac{F'(\rho)}{|\nabla u|}\Big)|\nabla u|^2,
  \end{align*}
  where we used the arithmetic-geometric mean inequality in the final line. Without loss of generality we may assume that $|\nabla u|^2(p) \geq 1$. Plugging this in we obtain $$  \Delta'|\nabla u|^2  \geq (n\La e^{F(\rho)/n} - 2F'(\rho))|\nabla u|^2 > 0,$$ which contradicts the maximum principle. To summarise, we have proven that $$\sup_{\overline{B_R}}|\nabla u|^2 = \sup_{\partial B_R}|\nabla u|^2.$$ Clearly we only need to bound the normal derivative at the boundary. In the neighbourhood of the boundary, note that by the comparison principle, $$\rho \leq u_\nu \leq R_\nu.$$ The normal derivative $\nabla_nu = \langle \nabla u , n\rangle$, where $n = -\nabla \rho /|\nabla\rho|$ is the inward pointing normal, then clearly satisfies $$ \nabla_n\rho \leq \nabla_n u \leq 0,$$ and hence we have the required bound.
\end{proof}

\begin{proof}[Proof of Theorem \ref{thm:weight}]
We let $\phi_\nu(x)  = u_\nu(x) - u_\nu(o)$. Then $|\nabla\phi_\nu| \leq 1$. Moreover, since $\phi_\nu(o) = 0$, for on any compact set $K$, there exists a constant $C_K$ such that $\n \phi_\nu\n _{C^1(K)}\leq C_K$. By a standard Arzela-Ascoli and diagonal argument, after passing to a subsequence, $\phi_\nu$ uniformly converge on compact sets to a Lipschitz function $\phi$ with $\mathrm{Lip}(\phi)\leq 1.$ By the continuity of the Monge-Amp\`ere operator under uniform limits (cf. \cite[pg.\ 147]{Dem-book}), $\phi$
 solves the Monge-Amp\`ere equation $$(\ddbar \phi)^n = e^{-F(\rho)}\omega^n,$$ and hence by construction, $$\int_M(\ddbar\phi)^n < \infty.$$

 \end{proof}

\section{Proof of Theorem \ref{thm:chern-weil}}

 \subsection{Smoothening by heat flow} Recall that since $\Rc_\omega \geq 0$, there exists a unique, positive, symmetric and stochastically complete heat kernel $H(x,y,t)$. We let $$u(x,t) = \int_X H(x,y,t)\phi(y)\,dy.$$ Then $u(x,t)$  is a a solution to the heat equation with initial condition $u(x,0) = \phi(x)$. By \cite{NT}, $u(x,t)$ is strictly psh for each $t$. We let $\psi(x):= u(x,1)$. Then by \cite{CZ1}, there exists a constant $A>0$ such that we have the following estimates:
 \begin{align*}
 |\nabla u|,~t|\ddbar u| \leq A.
  \end{align*}
   We will work with both the non-smooth weight $\phi$ and the smooth weights $u(x,t)$. To switch back and forth between the two weights, we need the following crucial estimate. The argument appears to be standard but we found it through Chatgpt 5.0.
\begin{lem}\label{lem:heat-est}
There exists a dimensional constant $c(n)$ such that for any $0\leq t_1\leq t_2$, $$u(x,t_2) \leq u(x,t_1) + Ac\sqrt{t_2-t_1},$$ where $A$ is the Lipschitz constant for $\phi$. In particular, $\psi(x)\leq\phi(x) + cA$.
\end{lem}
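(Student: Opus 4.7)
The plan is to exploit the semigroup property of the heat kernel together with the fact that the heat flow preserves Lipschitz constants on K\"ahler manifolds with $\Ric \geq 0$ (which is implied by $\BK > 0$).

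First, I would use the semigroup property together with stochastic completeness to write
$$u(x, t_2) - u(x, t_1) = \int_X H(x, y, t_2 - t_1)\bigl[u(y, t_1) - u(x, t_1)\bigr]\,dy,$$
which reduces matters to the spatial variation of $u(\cdot, t_1)$. Since $|\nabla u(\cdot, t)|_\omega \leq A$ for all $t \geq 0$ by the gradient bound already cited from \cite{CZ1} (or equivalently, by the Bakry--\'Emery inequality $|\nabla H_t \phi|^2 \leq H_t(|\nabla \phi|^2)$), integration along minimizing geodesics gives $|u(y, t_1) - u(x, t_1)| \leq A\, d_\omega(x, y)$. Hence
$$u(x, t_2) - u(x, t_1) \leq A \int_X H(x, y, t_2 - t_1)\, d_\omega(x, y)\,dy.$$

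The main analytic step is a heat-kernel second-moment estimate. By Cauchy-Schwarz (using $\int H = 1$),
$$\int_X H(x, y, s)\, d_\omega(x, y)\,dy \leq \Bigl(\int_X H(x, y, s)\, d_\omega(x, y)^2\,dy\Bigr)^{1/2}.$$
Setting $M(s) := \int_X H(x, y, s)\, d_\omega(x, y)^2\, dy$, one has $M(0) = 0$ and formally $M'(s) = \int_X H\,\Delta_y d_\omega(x, y)^2\, dy \leq 4n\int_X H\,dy = 4n$ by Laplace comparison, since $\Ric \geq 0$ in real dimension $2n$ forces $\Delta d_\omega(x, \cdot)^2 \leq 4n$ distributionally. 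Thus $M(s) \leq 4ns$, and assembling everything we obtain $u(x, t_2) - u(x, t_1) \leq 2\sqrt{n}\, A\, \sqrt{t_2 - t_1}$.

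The main obstacle is the rigorous justification of the differentiation under the integral and the integration by parts involved in computing $M'(s)$, since $d_\omega(x, \cdot)^2$ is only Lipschitz at the cut locus. These issues are handled by standard Calabi-type cutoff arguments combined with the Gaussian upper bounds on $H$ available on complete manifolds with $\Ric \geq 0$ (Li--Yau). Finally, the \emph{in particular} statement $\psi \leq \phi + cA$ follows from the main inequality by setting $t_1 = 0$, $t_2 = 1$.
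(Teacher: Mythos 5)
Your argument is correct, and its first half coincides with the paper's: both use the representation formula $u(x,t_2)=\int_X H(x,y,t_2-t_1)u(y,t_1)\,dy$, stochastic completeness, and the preservation of the Lipschitz bound to reduce the lemma to a first-moment estimate $\int_X H(x,y,s)\,d_\omega(x,y)\,dy \leq c(n)\sqrt{s}$. Where you diverge is in proving that moment bound: the paper estimates the integral directly, plugging in the Li--Yau Gaussian upper bound for $H$, decomposing $X$ into annuli $B(x,(k+1)\sqrt{s})\setminus B(x,k\sqrt{s})$, and summing with the help of Bishop--Gromov volume comparison; you instead pass to the second moment $M(s)=\int_X H\,d_\omega^2\,dy$ via Cauchy--Schwarz and derive $M(s)\le 4ns$ from the differential inequality $M'(s)\le 4n$, which rests on Laplacian comparison $\Delta d_\omega^2\le 4n$ under $\Ric\ge 0$. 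Both routes are valid. Yours yields a clean explicit constant $2\sqrt{n}$ and uses only comparison geometry at the level of the Laplacian (it is the analytic counterpart of the standard Brownian-motion bound $E[d^2(x,B_s)]\le 4ns$), but it shifts the technical weight onto justifying differentiation under the integral and the integration by parts defining $M'(s)$ on a complete noncompact manifold; note that the cut-locus issue is benign, since the singular part of the distributional Laplacian of $d_\omega^2$ is nonpositive and only improves the inequality, while the decay needed at infinity again comes from the Li--Yau Gaussian bounds, so you do not actually dispense with them. The paper's annuli argument avoids any differentiation in time and gets the same $C\sqrt{s}$ bound directly, at the cost of a less explicit constant coming from the Li--Yau and Bishop--Gromov inputs.
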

\begin{proof}
 We have the representation formula $$u(x,t_2) = \int_XH(x,y,t_2-t_1)u(y,t_1)\,dy.$$From the fact that $u(x,t_1)$ is also Lipschitz with the Lipschitz constant $A$, and stochastic completeness, it follows that $$u(x,t_2) \leq u(x,t_1) + A\int_X H(x,y,t)d(x,y)\,dy,$$ where we set $t = t_2-t_1$.  So it suffices to estimate the integral on the right. By the fundamental Li-Yau gradient estimates \cite{LY}, we have the following Gaussian estimate: $$H(x,y,t) \leq \frac{C}{|B(x,\sqrt{t})|}e^{-c\frac{d^2(x,y)}{t}},$$ for some $c<1/4$. Now for integers $k\geq 0$, consider the annuli $$A_k = B(x,(k+1)\sqrt{t})\setminus B(x,k\sqrt{t}).$$ Then
\begin{align*}
\int_X H(x,y,t)d(x,y)\,dy &\leq \sum_k\int_{A_k}H(x,y,t)d(x,y)\,dy\\
&\leq \frac{C\sqrt{t}}{|B(x,\sqrt{t})|} \sum_k (k+1) e^{-ck^2}|A_k|.
\end{align*}
But now by the Bishop-Gromov inequality, $$\frac{|A_k|}{|B(x,\sqrt{t})|} \leq \frac{|B(x,(k+1)\sqrt{t})|}{|B(x,\sqrt{t})|} \leq \omega_4(k+1)^4,$$ and so $$\int_X H(x,y,t)d(x,y)\,dy \leq C\omega_2\sqrt{t}\sum_{k=1}^\infty (k+1)^{5}e^{-ck^2} \leq c\sqrt{t}.$$

\end{proof}

 \begin{rem}
An interesting question is whether $\psi$ continues to have finite Monge-Amp\`ere mass.
\end{rem}

We also need the following basic observation on constructing suitable cut-off functions.

\begin{lem}\label{lem:cut-off}
Let $(X,\omega)$ satisfy $BK_\omega \geq 0$. Fix $o\in X$. Then there exist $0< \theta < 1$, $A>0$ and $a_0>0$ such that the following holds: for all $a>a_0$ there exist a smooth function $\chi_a:X\rightarrow [0,1]$ having the following properties:
\begin{enumerate}
\item $\chi_a \equiv 1$ on $B(o,\theta a)$ and $\mathrm{Supp}(\chi_a)\subset B(o,\theta^{-1}a).$
\item There exists a constant $A$ such that $$|\nabla \chi_a|, |\ddbar \chi_a| \leq \frac{A}{a}. $$
\end{enumerate}
\end{lem}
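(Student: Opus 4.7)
The plan is to smooth the distance function via heat flow and compose with a one-variable cut-off. Let $r(x) := d_\omega(o,x)$, which is $1$-Lipschitz. Since $BK_\omega \geq 0$ implies $\Ric_\omega \geq 0$, the heat kernel $H(x,y,t)$ is positive, symmetric and stochastically complete, and Bishop--Gromov combined with the Gaussian bound on $H$ shows that
\[
\tilde r(x) := \int_X H(x,y,1)\, r(y)\, dy
\]
is a well-defined smooth function on $X$.

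The first task is to establish three estimates for $\tilde r$: (a) $|\nabla \tilde r|_\omega \leq 1$, from the Cheng--Yau gradient estimate under $\Ric_\omega \geq 0$ (equivalently, heat flow preserves Lipschitz constants); (b) $|\tilde r - r| \leq c_0$ for a dimensional constant $c_0$, by the exact Gaussian-integral argument used to prove Lemma \ref{lem:heat-est}, now applied to $r$; and (c) $|\ddbar \tilde r|_\omega \leq c_1$ for a dimensional constant $c_1$, via the Chen--Zhu estimate \cite{CZ1} that $t|\ddbar u|\leq A$ for heat flow on K\"ahler manifolds with $BK_\omega \geq 0$ starting from Lipschitz initial data, evaluated at $t=1$ for $r$.

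With (a)--(c) in hand, I would fix once and for all a smooth $\eta:\RR\to[0,1]$ with $\eta \equiv 1$ on $(-\infty,1/2]$ and $\eta \equiv 0$ on $[3/4,\infty)$, and set $\chi_a(x) := \eta(\tilde r(x)/a)$. Taking $\theta = 1/3$ and $a_0 = 12 c_0$, estimate (b) converts $\tilde r$ sub-level sets into near-distance balls: $x\in B(o,\theta a)$ forces $\tilde r(x)<\theta a + c_0 < a/2$, so $\chi_a(x)=1$, while $\chi_a(x)\neq 0$ forces $r(x) < 3a/4 + c_0 < a < \theta^{-1} a$, giving (1). For the derivative bounds in (2), the chain rule combined with (a) and (c) yields
\[
|\nabla \chi_a|_\omega \leq \frac{\|\eta'\|_\infty}{a}, \qquad |\ddbar \chi_a|_\omega \leq \frac{\|\eta'\|_\infty c_1 + \|\eta''\|_\infty}{a}
\]
for $a\geq 1$.

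The hard part is the complex Hessian estimate (c). Since $r$ is not plurisubharmonic, one cannot simply quote the argument recalled at the start of Section 3; instead it demands a two-sided Chen--Zhu-type Bochner estimate under $BK_\omega \geq 0$, controlling $|\ddbar u|$ as an absolute bound rather than merely propagating a sign. Once (c) is granted, the remaining steps reduce to compositional calculus, and property (b) is precisely what links the naturally smooth level sets of $\tilde r$ back to the geometric distance balls demanded in (1).
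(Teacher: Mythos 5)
Your proposal is correct and follows essentially the same route as the paper: the paper also smooths $d(\cdot,o)$ by running the heat flow to time $1$, invokes the standard (Ni--Tam/Chen--Zhu type) bounds on the gradient and complex Hessian of the smoothed function together with its comparability to the distance, and then composes with a fixed one-variable cut-off rescaled by $a$. Your additive bound $|\tilde r - r|\le c_0$ via the Gaussian-kernel argument of Lemma \ref{lem:heat-est} is a slightly sharper form of the comparability the paper asserts, but the construction and the key estimates are the same.
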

\begin{proof}
This is standard, so we only sketch the proof. Let $u(x,t)$ solve the heat equation with $u(x,0) = d(x,o)$, and let $\eta(x) = u(x,1)$. Then there exists a constant $C>1$ such that
\begin{itemize}
\item $C^{-1}(1+ d(x,o))\leq \eta(x) \leq C(1+d(x,o))$.
\item $|\nabla \eta|, |\ddbar\eta| < C.$
\end{itemize}
Now let $\chi:\RR\rightarrow [0,1]$ be the usual cut-off function such that $\chi \equiv 1$ on $t\leq 1$ and $\mathrm{Supp}(\chi) \subset (-\infty, 2]$. Then $$\chi_a(x) := \chi \Big(\frac{\eta(x)}{a}\Big)$$ does the job with $\theta = (2C)^{-1}$ and $a_0 = 2C$.
\end{proof}

\subsection{Estimates on holomorphic sections} We first recall the following classical theorem of Hormander and Andreotti-Vesentini.

\begin{thm}[Hormander, Andreotti-Vesentini]\label{thm:l2-existence} Let $(X,\omega)$ be a complete K\"ahler manifold, let $u$ be a smooth function on $X$, and let $L$ be a holomorphic line bundle equipped with a smooth hermitian metric $h$ such that the curvature satisfies $$\ddbar u + \sqrt{-1}\Theta_h + \Ric(\omega) \geq c(x)\omega,$$ for some continuous function $c:X\rightarrow (0,\infty)$. Suppose we have an $L$-valued $(0,1)$ form $\be$ satisfying $\dbar \be = 0$ and $$\int_X \frac{\n \be\n ^2}{c}e^{-u} \omega^n < \infty.$$ Then there exists a unique $\xi \in \Ga(L)$ satisfying $\dbar \xi = \be$ and the $L^2$-estimate $$\int_X\frac{|\xi|^2}{c}e^{-u}\omega^n \leq \int_X \frac{\n \be\n ^2}{c}e^{-u} \omega^n.$$

\end{thm}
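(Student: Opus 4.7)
The plan is to run the classical H\"ormander--Andreotti--Vesentini $L^2$-method, pushed through on a complete (but possibly non-compact) K\"ahler manifold. First I would absorb the weight $e^{-u}$ into the hermitian metric on $L$ by setting $\tilde h := h e^{-u}$; then $\sqrt{-1}\Theta_{\tilde h} = \sqrt{-1}\Theta_h + \ddbar u$, so the curvature hypothesis becomes $\sqrt{-1}\Theta_{\tilde h} + \Ric(\omega) \geq c(x)\omega$, while the $L^2$-integrals in the statement are exactly the weighted norms associated to $\tilde h$ and $\omega$. Identifying $L$-valued $(0,1)$-forms with $(n,1)$-forms valued in $L\otimes K_X^{-1}$ turns the Chern curvature of the ambient bundle into precisely $\sqrt{-1}\Theta_{\tilde h} + \Ric(\omega)$, so the assumption becomes a pure positivity-of-curvature statement on an auxiliary line bundle.

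The heart of the argument is the fundamental a priori estimate. Applying the Bochner--Kodaira--Nakano identity on $(n,1)$-forms valued in $L\otimes K_X^{-1}$, and evaluating the algebraic commutator $[\sqrt{-1}\Theta,\La]$ (which on $(n,1)$-forms is diagonalised by the curvature eigenvalues, acting as the sum of all $n$ eigenvalues minus one, so bounded below by the smallest), I would obtain for every compactly supported smooth $L$-valued $(0,1)$-form $\alpha$,
$$\int_X c(x)\, |\alpha|^2_{\tilde h}\, \omega^n \;\leq\; \n \dbar \alpha\n ^2 + \n \dbar^* \alpha\n ^2.$$
Next, I would use completeness of $(X,\omega)$ to construct cutoffs $\eta_k \nearrow 1$ with $|d\eta_k|_\omega \to 0$, and run the standard Andreotti--Vesentini density argument to show that $C^\infty_c$-forms are dense in $\mathrm{Dom}(\dbar)\cap\mathrm{Dom}(\dbar^*)$ in the graph norm; the a priori estimate then extends to the whole intersection.

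With this in hand, the conclusion is a Hahn--Banach / Riesz argument. On the range of $\dbar^*$ inside $L^2(X,L;\tilde h)$, define the linear functional $\ell(\dbar^*\alpha) := \int_X \langle \alpha,\beta\rangle_{\tilde h}\,\omega^n$. The hypothesis $\dbar\beta=0$ ensures $\ell$ depends only on the $\ker\dbar$-component $\alpha_1$ of $\alpha$, and Cauchy--Schwarz combined with the a priori estimate applied to $\alpha_1$ gives
$$|\ell(\dbar^*\alpha)|^2 \;\leq\; \Big(\int_X \frac{\n \beta\n ^2}{c}\, e^{-u}\, \omega^n\Big)\n \dbar^* \alpha\n ^2.$$
Extending by Hahn--Banach and invoking the Riesz representation theorem produces $\xi \in L^2(X,L;\tilde h)$ satisfying the claimed $L^2$-bound, with $\dbar\xi = \beta$ in the distributional sense; elliptic regularity of $\dbar$ upgrades $\xi$ to a section in $\Gamma(L)$, and choosing the representative orthogonal to $\ker\dbar$ gives uniqueness.

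The main obstacle is the density step. Positivity of $c$ gives the a priori inequality on $C^\infty_c$-forms essentially by integration by parts, but extending to all of $\mathrm{Dom}(\dbar)\cap\mathrm{Dom}(\dbar^*)$ is where the completeness of $\omega$ is genuinely used -- this is the Andreotti--Vesentini refinement of H\"ormander's original compact-support argument, and the only place in the proof where completeness intervenes.
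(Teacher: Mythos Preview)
The paper does not supply its own proof of this theorem; it is quoted as a classical result of H\"ormander and Andreotti--Vesentini, with references to the original papers \cite{H, AV}. Your outline is precisely the standard argument and is correct. One small remark: the functional-analytic step you wrote out actually yields the usual bound $\int_X |\xi|^2 e^{-u}\omega^n \leq \int_X \frac{\n\beta\n^2}{c}e^{-u}\omega^n$, i.e.\ without the factor $1/c$ on the left-hand side; the extra $1/c$ in the displayed conclusion of the stated theorem appears to be a typo, and the uniqueness clause should be read as uniqueness of the minimal-norm solution, which is exactly what your final sentence provides.
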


 With $\phi$ and $\psi$ as in the previous section, we consider Hermitian metrics $h_{q\phi} = e^{-q\phi}(\omega^n)^{-1}$ and $h_{q\psi} = e^{-q\psi}(\omega^n)^{-1}$ on the canonical bundle $K_M$. We denote the norms simply as $\n \cdot\n _{q\phi}$ and $\n \cdot \n _{q\psi}$ respectively (suppressing in particular the dependence on $\omega$)

\begin{lem}\label{lem:section-est}
Let $u$ be a solution to the heat equation with initial data $u(x,0) = \phi$ as above. Then there exists a $q>>1$, a non-trivial holomorphic section of $s\in H^0(X,K_X)$ and a constant $C>0$ such that for all $0\leq t\leq 1$, $$\int_X\n s\n _{{qu }}^2\omega^2,\int_X\n \nabla_{qu}s\n _{qu}^2\omega^2 < C.$$
\end{lem}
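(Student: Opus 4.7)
The strategy has three parts: construct $s$ by H\"ormander's $L^2$ theorem at a smooth reference time, convert the resulting $L^2$ bound to one uniform in $t\in[0,1]$, and obtain the gradient bound via Bochner--Kodaira combined with an integration by parts that bypasses the singular-at-$t=0$ pointwise estimate $|\ddbar u|_\omega \leq A/t$.

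To construct $s$, I would apply Theorem \ref{thm:l2-existence} to $L=K_X$ equipped with the metric $h_{q\psi}$ for some $q\gg 1$, where $\psi = u(\cdot,1)$ is smooth and strictly psh. The Chern curvature is $\sqrt{-1}\Theta_{h_{q\psi}} = -\Ric(\omega) + q\,\ddbar\psi$, so the required positivity reduces to $q\,\ddbar \psi + \ddbar v \geq c(x)\omega$, which holds for $q$ large thanks to the strict plurisubharmonicity of $\psi$. To produce a non-trivial $s$, I use the standard peak-section trick: fix $o\in X$, a local holomorphic frame $\sigma_0$ of $K_X$ near $o$ with $\sigma_0(o)\ne 0$, a compactly supported cutoff $\chi$ equal to $1$ near $o$, and set $\xi_0 = \chi\sigma_0$, $\beta = \dbar\xi_0$. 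An auxiliary singular weight $v$ with a logarithmic pole of order $2n$ at $o$ forces the H\"ormander solution $\xi$ of $\dbar\xi = \beta$ to vanish at $o$ by integrability, so $s := \xi_0 - \xi$ is a non-trivial holomorphic section of $K_X$ with $\int_X\n s\n_{q\psi}^2\omega^2 \leq C_0$. To transfer this estimate across $t$, I observe that the proof of Lemma \ref{lem:heat-est} is in fact two-sided --- the Lipschitz bound used there is symmetric in $x$ and $y$, so stochastic completeness yields $|u(x,t_1)-u(x,t_2)| \leq cA\sqrt{|t_1-t_2|}$. Taking $t_1=1$ gives $|u(\cdot,t)-\psi| \leq cA$ uniformly in $t\in[0,1]$, hence $e^{-qu(\cdot,t)} \leq e^{qcA}e^{-q\psi}$ and the first bound follows with $C = e^{qcA}C_0$.

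The main obstacle is the gradient bound: the curvature $\sqrt{-1}\Theta_{h_{qu}} = -\Ric(\omega) + q\,\ddbar u$ contains $\ddbar u$, whose only available pointwise bound is the Chen--Zhu smoothing estimate $|\ddbar u|_\omega \leq A/t$, which degenerates as $t \to 0$. My plan is to apply the Bochner--Kodaira identity for the holomorphic section $s$,
\[
\tfrac{1}{2}\Delta_\omega \n s\n_{qu}^2 = \n \nabla s\n_{qu}^2 - \mathrm{tr}_\omega(\sqrt{-1}\Theta_{h_{qu}})\,\n s\n_{qu}^2,
\]
test against $\chi_a^2$ from Lemma \ref{lem:cut-off}, and use $S_\omega \geq 0$ to discard the $-S_\omega$ term in $\mathrm{tr}_\omega(\sqrt{-1}\Theta_{h_{qu}}) = -S_\omega + q\Delta_\omega u$ by its sign, leaving at most $q\Delta_\omega u$. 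The crucial step is a further integration by parts that rewrites $\int_X \chi_a^2\n s\n_{qu}^2 \Delta_\omega u\,\omega^2$ as an expression involving only $\langle \nabla \n s\n_{qu}^2, \nabla u\rangle_\omega$ and a boundary contribution from $\nabla\chi_a$; on this integrand only the uniform Lipschitz bound $|\nabla u|_\omega\leq A$ (valid for every $t\in[0,1]$ by Chen--Zhu) is used. Cauchy--Schwarz with a small absorption parameter, whose size depends only on $qA$, moves the resulting $\int\chi_a^2\n\nabla s\n_{qu}^2$ to the left; sending $a\to\infty$ kills the $|\nabla\chi_a|^2$ terms and produces the $t$-uniform gradient bound. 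The essential observation is that the dangerous $|\ddbar u|$ never enters the final estimate, and the possibly unbounded $-S_\omega$ (under the sole hypothesis of positive sectional curvature) is avoided via its sign; the case $t=0$ follows by passing to the limit $t \to 0^+$ with $u(\cdot,t)\to\phi$.
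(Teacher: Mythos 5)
Your proposal is correct, and its first two steps --- constructing $s$ by H\"ormander's theorem with the smooth weight $q\psi$ plus a logarithmic pole at $o$, and transferring the $L^2$ bound from $\psi=u(\cdot,1)$ to $u(\cdot,t)$ via Lemma \ref{lem:heat-est} --- coincide with the paper's argument (note only the one-sided inequality $\psi\le u(\cdot,t)+cA$ is needed, so your two-sided remark, while true, is not essential). Where you genuinely diverge is the $t$-uniform gradient bound. The paper runs the Bochner--Weitzenb\"ock/cutoff argument only once, at $t=1$, where the Chen--Zhu estimate gives $|\ddbar\psi|\le A$, obtaining $\int_X\chi_a^2\n\nabla_{q\psi}s\n_{q\psi}^2\omega^2\le C$; it then transfers this to every $t\in[0,1]$ through the pointwise connection identity $\nabla_{qu}s=\nabla_{q\psi}s+q\,s\,\partial(u-\psi)$, the uniform Lipschitz bounds on $u$ and $\psi$, and Lemma \ref{lem:heat-est} again to compare the weights. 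You instead run Bochner at each fixed $t>0$, discard $S_\omega$ by sign, and dispose of the dangerous term $q\int_X\chi_a^2(\Delta_\omega u)\n s\n_{qu}^2\omega^2$ by an extra integration by parts so that only $|\nabla u|_\omega\le A$ enters, then absorb $\tfrac12\int_X\chi_a^2\n\nabla_{qu}s\n_{qu}^2\omega^2$ (finite, since $\chi_a$ is compactly supported and $u(\cdot,t)$ is smooth) by Cauchy--Schwarz before letting $a\to\infty$. This buys an argument that never uses any bound on $\ddbar u$, not even at $t=1$, and avoids the connection-comparison identity; the paper's route buys a cleaner endpoint $t=0$, since $\nabla_{q\phi}s=\nabla_{q\psi}s+q\,s\,\partial(\phi-\psi)$ makes sense almost everywhere for the Lipschitz weight $\phi$, whereas you must justify the limit $t\to0^+$ (e.g.\ $\partial u(\cdot,t)\rightharpoonup\partial\phi$ weak-$*$ in $L^\infty_{\mathrm{loc}}$, uniform convergence of the weights on compact sets, and weak lower semicontinuity of the $L^2$ norm); that step is standard but is only asserted in your plan and should be written out, or replaced by the paper's pointwise identity.
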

\begin{proof}
As above, let $\psi = u(x,1)$. The proof of existence of holomorphic sections, $L^2$-integrable with respect to the weight $\psi$ is standard.  Nevertheless, we include an outline for the convenience of the reader. Fix a point $o\in X$. By scaling we may assume that there exist holomorphic coordinates $(z^1,z^2)$ on the ball $B(o,2)$ . Let $\chi$ be a cut-off function with support in $B(o,2)$ such that $\chi \equiv 1$ on $B(o,1)$. Let $\sigma = \chi\cdot dz^1\wedge dz^2$ and $\be = \dbar_{K_M}\sigma$, where $\dbar_{K_M}$ in the $\dbar$ operator on the canonical line bundle. Then $\be$ is a $\dbar$-closed $(1,0)$ $K_M$-valued form. We now apply the H\"ormander-Andreotti-Vesentini Theorem \ref{thm:l2-existence} above with the Hermitian metric $h = (\omega^2)^{-1}$ and the weight function $$\tilde\psi = q\psi + 4\chi\cdot\log|z|^2.$$ For $q>>1$,  clearly $\ddbar\tilde\psi > c\omega$, for some continuous function $c \in C^0(X)$. Without loss of generality we can assume that $c<1$ and that $c>\delta$ on $U$.  Moreover $$\ddbar\tilde\psi + \Theta_h + \Rc(\omega) = \ddbar\tilde\psi >0,$$ and so there exists a solution $\xi\in \Ga(K_X)$ to $\dbar_{K_X}\xi = -\be$ satisfying $$\int_X|\xi|^2e^{-q\psi - 4\chi\log|z|^2}\omega^2 < C.$$ In particular, $\xi(o) = 0$. Clearly, we also have that $$\int_X|\xi|^2e^{-q\psi}\omega^2<\infty.$$Finally let $s = \sigma + \xi$. Then $s\in H^0(M,K_X)$ and satisfies $$\int_X|s|^2e^{-q\psi}\omega^2 < C,$$ for some $C>0$. By the estimate in Lemma \ref{lem:heat-est}, we see that there exists a dimensional constant $A$ such that for all $0\leq t\leq 1$, $$\psi(x) \leq u(x,t) + A\sqrt{1-t}.$$ Then $$\int_X \n s\n_{qu}^2\omega^2 = \int_X|s|^2e^{-qu}\omega^2 \leq e^{Aq}C.$$ To obtain a gradient bound, we make use of the following B\"ochner-Weitzenb\"ock formula: $$\Delta \n s\n _{q\psi}^2 = \n \nabla_{q\psi}s\n _{q\psi}^2 - q\Delta\psi + S_\omega\n s\n _{q\psi}^2 \geq \n \nabla_{q\psi}s\n _{q\psi}^2  - A\n s\n _{q\psi}^2,$$ where $S_\omega$ is the scalar curvature of $\omega$.  Let $\chi_a$ be the family of cut-off functions from Lemma \ref{lem:cut-off}. Then multiplying by $\chi_a^2$ and integrating by parts we obtain
\begin{align*}
\int_X\chi_a^2\n \nabla_{q\psi}s\n _{q\psi}^2\omega^n &\leq A\int_{X}\chi_a^2\n s\n _{q\psi}^2\omega^n + \int_{X}\Delta\chi_a^2\n s\n _{q\psi}^2\omega^n\\
&\leq C'.
\end{align*}

This implies an upper bound for $\int_X \n\nabla_{qu}s\n^2_{q\phi}\omega^2$ for all $t\in [0,1]$: Since
$$ \nabla_{qu}s = \nabla_{q\psi}s + q\,s\,d(u-\psi),$$
and $u, \psi$ are Lipschitz with Lipschitz constant $A$, we get
$$  |\nabla_{qu}s|_{qu}^2 \le 2|\nabla_{q\psi}s|_{qu}^2 + 2q^2A^2\,|s|_{qu}^2.$$
Lemma \ref{lem:heat-est} then once again gives
\begin{align*}
\int_X \n\nabla_{qu}s\n^2_{qu}\omega^n \leq C'' \Big(\int_X\n\nabla_{q\psi}s\n_{q\psi}^2\omega^n + \int_X \n s\n_{q\psi}^n\omega^2 \Big) \leq C''C,
\end{align*}
for some $C''$ depending only on $q$ and $A$.

\end{proof}

As a consequence we obtain the following estimate:

\begin{lem}\label{lem:est-l2norm-can-ddbarphi-omega}
Let $s\in H^0(X,K_X)$ be the holomorphic section constructed in Lemma \ref{lem:section-est} above. Then there exists a constant $C$ such that for all $0\leq t\leq 1$, $$\int_X  \n s \n^{2}_{qu}\ddbar u \wedge\omega\leq C.$$
\end{lem}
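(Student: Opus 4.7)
The plan is to multiply the integrand by the compactly supported cutoffs $\chi_a$ from Lemma \ref{lem:cut-off}, integrate by parts, and then control the resulting two terms using the $L^2$-bounds on $s$ and $\nabla_{qu}s$ from Lemma \ref{lem:section-est} together with the Lipschitz bound $|\nabla u|_\omega \leq A$. Since $\ddbar u \wedge \omega$ is a positive Radon measure and $\chi_a^2 \|s\|^2_{qu} \nearrow \|s\|^2_{qu}$ monotonically as $a \to \infty$, monotone convergence reduces matters to a uniform-in-$a$ bound on $\int_X \chi_a^2 \|s\|^2_{qu}\, \ddbar u \wedge \omega$.

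For that bound I would apply Stokes' theorem---legitimate because $\chi_a$ has compact support---to rewrite the integral as $-\sqrt{-1}\int_X \d(\chi_a^2\|s\|^2_{qu}) \wedge \dbar u \wedge \omega$, and use the holomorphicity of $s$ to obtain $\partial \|s\|^2_{qu} = \langle \nabla_{qu}s, s\rangle_{qu}$; the complementary $(0,1)$-piece $\dbar \|s\|^2_{qu}$ contributes nothing because $\dbar\|s\|^2_{qu} \wedge \dbar u \wedge \omega$ vanishes for bidegree reasons on the surface. Leibniz then splits the integral into a piece with factor $\partial\chi_a \wedge \dbar u \wedge \omega$ and a piece with factor $\langle \nabla_{qu}s, s\rangle_{qu} \wedge \dbar u \wedge \omega$. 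The first is bounded using $|\partial\chi_a|_\omega \leq A/a$, $|\dbar u|_\omega \leq A$, and the $L^2$-bound on $s$, producing a contribution of order $1/a$ that vanishes in the limit. The second is dominated pointwise by $A\chi_a^2 \|\nabla_{qu}s\|_{qu} \|s\|_{qu}$; Cauchy-Schwarz together with the two $L^2$-bounds of Lemma \ref{lem:section-est} then gives a bound independent of both $a$ and $t$.

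The main subtlety I expect is at the endpoint $t=0$, where $u=\phi$ is only Lipschitz, so $\ddbar u$ is merely a closed positive $(1,1)$-current and the integration by parts above must be understood distributionally. The Lipschitz regularity of $\phi$ places $\dbar\phi$ in $L^\infty_{\mathrm{loc}}$, which is precisely what is needed to pair with the Radon measure $\ddbar\phi \wedge \omega$. I would make this rigorous by locally mollifying $\phi$ by convolution, running the argument for the smooth psh approximants (with all constants depending only on $A$ and the $L^2$-bounds of Lemma \ref{lem:section-est}, and hence stable under mollification), and then passing to the limit via the weak-$*$ continuity of the Monge-Amp\`ere operator on locally bounded psh functions. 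Since the input constants are uniform in $t\in[0,1]$, the final estimate is uniform in $t$ as well.
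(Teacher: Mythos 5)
Your argument for $t>0$ is, in essence, the paper's own proof: multiply by the compactly supported cutoff $\chi_a$ of Lemma \ref{lem:cut-off}, integrate by parts (the $(0,1)$-piece dropping out for bidegree reasons, equivalently $\partial\n s\n^2_{qu}=\langle \nabla_{qu}s,s\rangle_{qu}$ by holomorphicity), bound the $\partial\chi_a\wedge\dbar u\wedge\omega$ term by $|\nabla\chi_a|\le A/a$, $|\nabla u|\le A$ and the $L^2$-bound on $s$, and bound the remaining term via $|\al\wedge\bar\be\wedge\omega|\le|\al||\be|\omega^2$, the Lipschitz bound on $u$, and Kato/Cauchy--Schwarz together with Lemma \ref{lem:section-est}; the paper uses Kato plus AM--GM where you use Cauchy--Schwarz, which is immaterial. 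Two small points where you deviate: first, the cutoffs $\chi_a$ are not nested in $a$, so the passage from the uniform bound on $\int\chi_a\n s\n^2_{qu}\ddbar u\wedge\omega$ to the full integral should be by Fatou rather than monotone convergence (harmless, since $\ddbar u\wedge\omega\ge 0$). Second, at $t=0$ the paper does not mollify: it simply notes that $u(\cdot,t)\to\phi$ uniformly on compact sets and invokes Bedford--Taylor continuity, so the uniform-in-$t$ bounds for $t>0$ pass to the limit. Your convolution-mollification route has a genuine wrinkle: convolution preserves plurisubharmonicity only within a coordinate chart, and the support of $\chi_a$ is a large ball, so a globally defined smooth \emph{psh} approximant of $\phi$ there is not directly available; positivity of $\ddbar\phi_\delta\wedge\omega$ (hence local mass bounds) is exactly what one needs to pass products with the merely continuous factor $\chi_a\n s\n^2_{q\phi_\delta}$ to the limit. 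The cleanest repair is the paper's: use the heat-flow approximants $u(\cdot,t)$, which are globally smooth, strictly psh, uniformly Lipschitz, and converge uniformly on compacts to $\phi$, in place of convolution.
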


\begin{proof}
Let $\chi_a$ be the family of cut-off functions from Lemma \ref{lem:cut-off}, and let  $$I_t(a) := \int_X \chi_a  \n s \n^{2}_{qu}\ddbar u \wedge\omega.$$ The required estimate follows from the following claim: There exists a constant $C>0$ such that  $I_t(a) \leq C$ for all $0\leq t\leq 1$ and for all $a\geq a_0$, where $a_0$ is as in Lemma \ref{lem:cut-off}. First assume that $t>0$. Integrating by parts we see that $$ I_t(a) = \int_X\n s\n^2_{qu}\partial\chi_a\wedge\bar\partial u\wedge\omega + \int_X\chi_a\partial\n s\n_{q u}^2 \wedge\dbar u\wedge\omega.$$ The first term is clearly uniformly bounded by Lemma \ref{lem:section-est} and the uniform Lipschitz control on $u$. For the second term we note that if $\al,\be$ are two $(1,0)$ forms then $|\al\wedge\bar\be\wedge\omega| \leq |\al||\be|\omega^2,$ and since $u$ is Lipschitz with Lipschitz constant bounded by $A$, we then have that $$ \Big|\int_X\chi_a\partial\n s\n_{qu}^2 \wedge\dbar u \wedge\omega\Big| \leq A\int_X\chi_a|\nabla \n s\n^2_{qu}| \omega^2.$$  By the Kato inequality and the AM-GM inequality, for any section $\xi$ of a vector bundle,
\begin{equation}\label{eq:kato}
|\nabla |\xi|^2| \leq 2|\xi||\nabla\xi| \leq |\xi|^2 + |\nabla \xi|^2.
\end{equation}Applying this to $\xi = s$ in the above expression we see that  $$\int_X\chi_a|\nabla \n s\n^2_{qu}| \omega^2\leq \int_X\chi_a\n s\n^2_{qu} \omega^2 + \int_X\chi_a||\nabla_{qu}s||^2_{qu} \omega^2.$$ Each term is uniformly bounded by the Lemma above. Finally, the claimed estimated at $t = 0$ follows from standard Bedford-Taylor theory (cf. \cite{Dem-book}) and the fact that $u(x,t)$ converges uniformly to $\phi(x)$ on compact sets.

\end{proof}

\subsection{Proof of Theorem \ref{thm:chern-weil}}Let $s\in H^0(X,K_X)$ as in Lemma \ref{lem:section-est}.  For $\vep>0$ we consider the $(1,1)$ current $$\zeta_\vep(s) := \ddbar\log ( \n s \n_{q\phi}^2+\vep^2) + q\ddbar\phi. $$

\begin{lem}
$\zeta_\vep(s)$ is a closed, positive $(1,1)$ current satisfying $$\zeta_\vep(s) \geq \frac{\n s\n_{q\phi}^2}{\n s\n_{q\phi}^2 + \vep^2}\Ric(\omega).$$ In particular, by Fatou's lemma, $$\int_X\Ric(\omega)^2 \leq
\liminf_{\vep\rightarrow 0^+} \int_X\zeta_\vep(s)\wedge\zeta_\vep(s),$$ where the wedge product on the right is interpreted in the Bedford-Taylor sense.
\end{lem}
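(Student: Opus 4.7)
The plan is a three-step approach: a local computation that simultaneously yields positivity of $\zeta_\vep(s)$ and the pointwise lower bound; a regularization argument to handle the fact that $\phi$ is only Lipschitz; and a Bedford--Taylor/Fatou argument for the integral inequality. For a holomorphic section $s$ of a hermitian holomorphic line bundle $(L,h)$ with local weight $\varphi$ (so $|s|^2_h=|f|^2 e^{-\varphi}$ for a local holomorphic representative $f$), a direct computation using $\ddbar\log G = \ddbar G/G - \sqrt{-1}(\d G\wedge\dbar G)/G^2$ combined with the standard identity $\ddbar|s|^2_h = \tau_h(s) - |s|^2_h\,\Theta_h$, where $\tau_h(s) := \sqrt{-1}e^{-\varphi}(\d f - f\d\varphi)\wedge\overline{(\d f - f\d\varphi)}\geq 0$ is the nonnegative $(1,1)$-form coming from the $(1,0)$-part of the Chern connection, yields the key pointwise identity
\[
\ddbar\log(|s|^2_h+\vep^2)=\frac{\vep^2\,\tau_h(s)}{(|s|^2_h+\vep^2)^2}-\frac{|s|^2_h}{|s|^2_h+\vep^2}\,\Theta_h.
\]
Specializing to $L=K_X$ with $h=h_{q\phi}$: the natural metric $(\omega^n)^{-1}$ on $K_X$ has Chern curvature $-\Ric(\omega)$, and the factor $e^{-q\phi}$ adds $q\ddbar\phi$, so $\Theta_{h_{q\phi}}=q\ddbar\phi-\Ric(\omega)$. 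Substituting into the definition of $\zeta_\vep(s)$ and collecting,
\[
\zeta_\vep(s)=\frac{\vep^2\,\tau_{h_{q\phi}}(s)}{(\n s\n_{q\phi}^2+\vep^2)^2}+\frac{q\vep^2}{\n s\n_{q\phi}^2+\vep^2}\ddbar\phi+\frac{\n s\n_{q\phi}^2}{\n s\n_{q\phi}^2+\vep^2}\Ric(\omega).
\]
All three summands are positive ($\phi$ is psh and $\Ric(\omega)>0$), so $\zeta_\vep(s)$ is closed (manifestly, as $\ddbar[\log(\n s\n_{q\phi}^2+\vep^2)+q\phi]$), positive, and dominates $\frac{\n s\n_{q\phi}^2}{\n s\n_{q\phi}^2+\vep^2}\Ric(\omega)$.

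Because $\phi$ is only Lipschitz, these manipulations must be interpreted in the sense of currents. I would justify the identity by approximating $\phi$ with the smooth strictly psh heat regularizations $\phi_t:=u(x,t)$ for $t>0$, applying the computation classically, and passing $t\rightarrow 0^+$. Uniform convergence $\phi_t\to\phi$ on compact sets combined with $\vep>0$ gives uniform convergence of $\log(\n s\n_{q\phi_t}^2+\vep^2)$ to $\log(\n s\n_{q\phi}^2+\vep^2)$ on compacts, hence weak convergence of the currents $\zeta_\vep^t(s)\to\zeta_\vep(s)$; positivity and the pointwise lower bound pass to the limit.

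For the Fatou consequence, $\log(\n s\n_{q\phi}^2+\vep^2)+q\phi$ is a continuous plurisubharmonic function, so $\zeta_\vep(s)$ is a closed positive $(1,1)$ current with continuous potential and the Bedford--Taylor wedge $\zeta_\vep(s)\wedge\zeta_\vep(s)$ is a well-defined positive measure. Writing $\zeta_\vep(s)=f_\vep\,\Ric(\omega)+\mu_\vep$ with $f_\vep:=\n s\n_{q\phi}^2/(\n s\n_{q\phi}^2+\vep^2)$ and $\mu_\vep$ a closed positive current with continuous potential, expanding $\zeta_\vep(s)\wedge\zeta_\vep(s)$ and discarding the positive cross terms and the $\mu_\vep\wedge\mu_\vep$ term gives
\[
\zeta_\vep(s)\wedge\zeta_\vep(s)\geq f_\vep^2\,\Ric(\omega)\wedge\Ric(\omega).
\]
Since $f_\vep\nearrow 1$ off the measure-zero zero set of $s$ as $\vep\to 0^+$, monotone convergence yields $\int_X\Ric(\omega)^2\leq\liminf_{\vep\to 0^+}\int_X\zeta_\vep(s)\wedge\zeta_\vep(s)$. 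The main subtlety is the regularization step---passing from the smooth-model identity to its current-theoretic counterpart for a merely Lipschitz weight---but continuity of $\phi$ together with $\vep>0$ keeps everything in the Bedford--Taylor regime of continuous potentials, so this is essentially bookkeeping rather than a substantive analytic difficulty.
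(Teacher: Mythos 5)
Your pointwise identity, the resulting three-term decomposition of $\zeta_\vep(s)$, and the heat-flow regularisation are all correct, and this part is essentially a variant of the paper's argument: the paper instead smooths $\phi$ by local psh convolutions, tests against a compactly supported positive form, and obtains the lower bound from the elementary inequality $\ddbar\log(f+c)\geq \frac{f}{f+c}\ddbar\log f$ together with the fact that $\ddbar\log\n s\n^2_{q\phi_\delta}=\Ric(\omega)-q\ddbar\phi_\delta$ off $\{s=0\}$, while your exact Chern-connection identity gives the same inequality with the error terms displayed explicitly. Up to that point there is no problem.

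The gap is in the final step. You write $\zeta_\vep(s)=f_\vep\,\Ric(\omega)+\mu_\vep$, with $f_\vep=\n s\n_{q\phi}^2/(\n s\n_{q\phi}^2+\vep^2)$, and describe $\mu_\vep$ as ``a closed positive current with continuous potential''. It is neither closed nor does it admit a potential: $f_\vep\,\Ric(\omega)$ is not closed, since $d\bigl(f_\vep\,\Ric(\omega)\bigr)=df_\vep\wedge\Ric(\omega)\neq 0$ in general, so $\mu_\vep=\zeta_\vep(s)-f_\vep\,\Ric(\omega)$ is merely a positive $(1,1)$ current with measure coefficients. Consequently the proposed ``expansion'' of the Bedford--Taylor product $\zeta_\vep(s)\wedge\zeta_\vep(s)$ into four terms is not available: $\mu_\vep\wedge\mu_\vep$ is not a defined object, and the Bedford--Taylor square is defined through the continuous potential of $\zeta_\vep(s)$, not through such a splitting. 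The inequality $\zeta_\vep(s)\wedge\zeta_\vep(s)\geq f_\vep^2\,\Ric(\omega)^2$ you want is true, but it needs a different justification; the cleanest route inside your own framework is to prove it first for the smooth approximants $\zeta_{\vep,t}(s)$ built from $u(\cdot,t)$, where it is the pointwise linear-algebra fact that $\alpha\geq\beta\geq 0$ for $(1,1)$-forms implies $\alpha^2\geq\beta^2$ (write $\alpha^2-\beta^2=(\alpha-\beta)\wedge(\alpha+\beta)$, a product of positive forms), and then let $t\to 0^+$: the left side converges weakly to $\zeta_\vep(s)\wedge\zeta_\vep(s)$ by Bedford--Taylor continuity under local uniform convergence of the potentials, and $f_{\vep,t}\to f_\vep$ locally uniformly, so the measure inequality passes to the limit. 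Your monotone-convergence/Fatou step in $\vep$ is then fine, since $\{s=0\}$ has measure zero. (For comparison, the paper disposes of this step by citing the monotonicity ``$T\geq S$ with bounded local potentials implies $T^2\geq S^2$''; since in this application $S=f_\vep\,\Ric(\omega)$ is not closed either, the approximation argument just described is the careful way to implement that claim.)
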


\begin{proof}
Let $\chi$ be a strongly positive $(1,1)$ form on $X$ with compact support.  Without loss of generality, we may assume that $\chi$ is supported on a coordinate neighbourhood $U$. Let $v_\vep = \log(\n s\n^2_{q\phi} + \vep^2).$ By Bedford-Taylor theory,, $$\int_X \zeta_\vep(s)\wedge\chi = \int_X (v_\vep + q\phi)\ddbar\chi.$$ Let $\phi_{\delta}$ be a smoothening of $\phi$ (say via convolution with an approximation to identity) such on the support of $\chi$ we have that $\phi_\delta\rightarrow \phi$ uniformly and in $W^{1,p}$ for all $p>1$. Note also that $\phi_\delta$ is pluri-subharmonic for all $\delta>0$. Let $v_{\vep,\delta}:= \log(\n s\n^2_{q\phi_\delta} + \vep^2),$  and $$\zeta_{\vep,\delta}(s) := \ddbar v_{\vep,\delta} + q\ddbar\phi_\delta$$ in $U$. By the dominated convergence theorem and integration by parts, $$\int_X (v_\vep + q\phi)\ddbar\chi. = \lim_{\delta\rightarrow 0}\int_X \chi\wedge \zeta_{\vep,\delta}(s).$$ Next, for any smooth function $f$ and any constant $c$, an elementary computation shows that on the set where $f\neq 0$, $$\ddbar \log (f+ c) \geq \frac{f}{f+c}\ddbar \log f.$$ Taking $f = \n s\n^2_{q\phi_\delta}$ and $c =\vep^2$ we get that on $X\setminus \{s = 0\}$,
\begin{align*}
\zeta_{\vep,\delta}(s) & \geq  \frac{\n s\n^2_{q\phi_\delta}}{\n s \n^2_{q\phi_\delta} + \vep^2} \ddbar\log||s||^2_{q\phi_\delta} + q\ddbar\phi_\delta \\
&= \frac{\n s\n^2_{q\phi_\delta}}{\n s \n^2_{q\phi_\delta} + \vep^2}\Ric(\omega) + \frac{\vep^2q}{\n s\n^2_{q\phi_\delta} + \vep^2}\ddbar\phi_\delta\\
& \geq  \frac{\n s\n^2_{q\phi_\delta}}{\n s \n^2_{q\phi_\delta} + \vep^2}\Ric(\omega).
\end{align*}
 But then clearly the inequality also holds on all of $X$. Integrating against $\chi$, and letting $\delta\rightarrow 0$ we obtain the required lower bound for $\zeta_\vep(s)$. The second part follows from the Bedford-Taylor theory. Indeed if $S,T$ are closed positive $(1,1)$ currents with bounded local potentials such that $T\geq S$, then $T^2\geq S^2$, where the wedge product is interpreted in the Bedford-Taylor sense. 

 \end{proof}

Theorem \ref{thm:chern-weil} is now a consequence of the following:

\begin{lem}\label{lem:chern-weil}
For each $k=1,2$, $$\int_X\zeta_\vep(s)^k\wedge (\ddbar\phi)^{2-k} \leq q^k\int_X(\ddbar\phi)^2.$$
\end{lem}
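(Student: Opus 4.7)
The plan is to combine Bedford--Taylor integration by parts with a cutoff that is concave in a plurisubharmonic exhaustion, so that the resulting error term has a definite sign. Write $\zeta_\vep=\ddbar w_\vep$ where
\[
w_\vep:=\log\bigl(|s|^2+\vep^2 e^{q\phi}\bigr)=q\phi+2\log\vep+V_\vep, \qquad V_\vep:=\log\!\Big(1+\tfrac{\n s\n^2_{q\phi}}{\vep^2}\Big)\ge 0,
\]
is globally continuous and plurisubharmonic on $X$. Under the positive sectional curvature hypothesis of Theorem \ref{thm:chern-weil}, Greene--Wu furnishes a smooth strictly plurisubharmonic exhaustion $\rho:X\to\RR$. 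Fix a smooth \emph{concave} cutoff $\chi:\RR\to[0,1]$ with $\chi\equiv 1$ on $(-\infty,1]$ and $\chi\equiv 0$ on $[2,\infty)$, so that $\chi',\chi''\le 0$, and set $\chi_a(x):=\chi(\rho(x)/a)$. Then $\chi_a$ is smooth with compact support $\{\rho\le 2a\}$, $\chi_a\nearrow 1$ as $a\to\infty$, and
\[
\ddbar\chi_a=\tfrac{\chi'(\rho/a)}{a}\ddbar\rho+\tfrac{\chi''(\rho/a)}{a^2}\,\partial\rho\wedge\bar\partial\rho\le 0
\]
because $\chi',\chi''\le 0$ while $\ddbar\rho\ge 0$ and $\partial\rho\wedge\bar\partial\rho\ge 0$.

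Set $T_{k-1}:=\zeta_\vep^{k-1}\wedge(\ddbar\phi)^{2-k}$, a closed positive $(1,1)$-current with continuous local potentials (from continuity of $w_\vep$ and $\phi$). Writing $\zeta_\vep^k\wedge(\ddbar\phi)^{2-k}=\ddbar w_\vep\wedge T_{k-1}$ and applying Bedford--Taylor integration by parts to move $\ddbar$ off $w_\vep$, then splitting $w_\vep=q\phi+2\log\vep+V_\vep$, integrating by parts a second time on the $\phi$-term, and using $\int_X\ddbar\chi_a\wedge T_{k-1}=0$ (Stokes plus closedness of $T_{k-1}$) to kill the constant piece, I arrive at the recursion
\[
\int_X\chi_a\,\zeta_\vep^k\wedge(\ddbar\phi)^{2-k}=q\int_X\chi_a\,\zeta_\vep^{k-1}\wedge(\ddbar\phi)^{3-k}+\int_X V_\vep\,\ddbar\chi_a\wedge T_{k-1}.
\]
The key point is that the last term is non-positive because $V_\vep\ge 0$, $T_{k-1}\ge 0$, and $\ddbar\chi_a\le 0$. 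Iterating the resulting inequality
\[
\int_X\chi_a\,\zeta_\vep^k\wedge(\ddbar\phi)^{2-k}\le q\int_X\chi_a\,\zeta_\vep^{k-1}\wedge(\ddbar\phi)^{3-k}
\]
for $k=2,1$ down to $\int_X\chi_a(\ddbar\phi)^2\le\int_X(\ddbar\phi)^2$, and then letting $a\to\infty$ by monotone convergence, gives the claimed bound.

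The essential point, and the only thing that distinguishes this from routine Bedford--Taylor manipulation, is the construction of a cutoff making $\ddbar\chi_a\le 0$. This uses both the positive sectional curvature hypothesis (which, via Greene--Wu, furnishes the smooth psh exhaustion $\rho$) and the concavity of $\chi$, so that both terms of $\ddbar\chi_a$ are non-positive. Without such a sign one would have to absorb an error of the form $(A/a)\int_X V_\vep\,\omega\wedge T_{k-1}$, which calls for an a priori bound like $\int_X\n s\n^2_{q\phi}\,\omega\wedge T_{k-1}<\infty$ that is available for $T_0=\ddbar\phi$ (from Lemma \ref{lem:est-l2norm-can-ddbarphi-omega}) but not obviously for $T_1=\zeta_\vep$.
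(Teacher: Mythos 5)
Your Bedford--Taylor bookkeeping (writing $\zeta_\vep=\ddbar w_\vep$, integrating by parts twice, killing the constant by Stokes, and iterating the recursion from $k=2$ down to $(\ddbar\phi)^2$) is fine, but the argument collapses at its single load-bearing point: the cutoff you need does not exist. A smooth $\chi:\RR\to[0,1]$ with $\chi\equiv1$ on $(-\infty,1]$, $\chi\equiv0$ on $[2,\infty)$ and $\chi''\le0$ everywhere is impossible: concavity forces $\chi'(t)\le\chi(2)-\chi(1)=-1$ for $t\ge2$, hence $\chi(t)\le-(t-2)\to-\infty$, contradicting $\chi\ge0$; more generally, a concave function on $\RR$ that is bounded is constant. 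The obstruction is intrinsic, not an artifact of composing with $\rho$: if $\chi_a$ is compactly supported, nonconstant, and satisfies $\ddbar\chi_a\le0$ everywhere, then $-\chi_a$ is a nonconstant compactly supported plurisubharmonic function, which violates the sub-mean-value inequality at a point outside the support (the average of $-\chi_a$ over a large ball containing the support is negative while $-\chi_a=0$ at the center). Concretely, on the transition annulus the term $a^{-2}\chi''(\rho/a)\,\sqrt{-1}\,\partial\rho\wedge\bar\partial\rho$ must become positive where the cutoff flattens out to $0$, so $\ddbar\chi_a$ has no sign, the error term $\int_X V_\vep\,\ddbar\chi_a\wedge T_{k-1}$ is not non-positive, and the inequality step in your recursion is unjustified.

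This is exactly the difficulty your closing paragraph flags, and it is the part the paper proves rather than sidesteps: with the cutoffs of Lemma \ref{lem:cut-off}, which only satisfy $|\nabla\chi_a|,|\ddbar\chi_a|\le A/a$, one shows that for each fixed $\vep$ the error terms tend to $0$ as $a\to\infty$. For $k=1$ this uses $\log(1+x)\le x$ together with the bound $\int_X\n s\n_{qu}^2\,\ddbar u\wedge\omega\le C$ of Lemma \ref{lem:est-l2norm-can-ddbarphi-omega}; for $k=2$ one first replaces $\phi$ by the heat-flow regularizations $u(x,t)$ (so $\zeta_\vep$ is replaced by $\zeta_{\vep,t}$), integrates by parts a second time, and controls the resulting terms uniformly in $t$ and $a$ by Cauchy--Schwarz against $\int_X\n s\n_{qu}^2\,\omega^2$ and $\int_X\n\nabla_{qu}s\n_{qu}^2\,\omega^2$ from Lemma \ref{lem:section-est}, before letting $t\to0$ via Bedford--Taylor convergence. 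So the a priori bounds you call ``not obviously available'' are precisely what Lemmas \ref{lem:section-est} and \ref{lem:est-l2norm-can-ddbarphi-omega} supply, and some such quantitative input is unavoidable: there is no soft sign trick with a plurisuperharmonic cutoff that replaces it.
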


\begin{proof}
 We first prove the Lemma for $k=1$. Let $\chi_a$ be the family of cut-off functions from before (cf. Lemma \ref{lem:cut-off}). It is enough to prove that $$I(a) := \int_X \chi_a^3\ddbar\ln(\n s \n_{q\phi}^2 + \vep^2)\wedge\ddbar\phi \xrightarrow{a\rightarrow \infty}0.$$ From the definition of wedge products in the Bedford-Taylor theory,  we can integrate by parts, and obtain
 \begin{align*}
 I(a)&= \int_X \log\Big(1+ \frac{\n s\n_{q\phi}^2}{\vep^2}\Big) \ddbar\chi_a^3\wedge \ddbar\phi\\
 &\leq  \frac{C}{a}\int_X\log\Big(1+ \frac{\n s\n_{q\phi}^2}{\vep^2}\Big)\omega\wedge\ddbar\phi\\
 &\leq \frac{C}{a\vep^2}\int_X\n s\n^2_{q\phi}\omega\wedge\ddbar\phi\\
 &\leq \frac{C'}{a\vep^2}
 \end{align*} for some constant $C'$ independent of $a$ by Lemma \ref{lem:est-l2norm-can-ddbarphi-omega}.  Note that we used the estimates on $\chi_a$ from Lemma \ref{lem:cut-off} in the second line, and the elementary inequality $\log(1+x) \leq x$ for $x\geq 0$ in the third line.

 Next, we consider $k=2$. It is enough to prove that $$J(a) := \int_X\chi_a^3\ddbar\log( \n s \n_{q\phi}^2+ \ep^2)\wedge\zeta_\ep(s) \xrightarrow{a\rightarrow\infty} 0.$$ Once again by Bedford-Taylor theory, $J(a) = \lim_{t\rightarrow 0^+}J_t(a),$ where
$$J_t(a) = \int_X\chi_a^3\ddbar\log( \n s \n_{q\phi}^2+ \ep^2)\wedge\zeta_{\ep,t}(s), \text{ and }$$ $$\zeta_{\vep,t}(s):= \ddbar\log(\n s\n_{qu}^2 + \vep^2)+ q\ddbar u.$$ It suffices to prove that there exists a constant $C$ (independent of $t$ and $a$) such that $|J_t(a)| \leq Ca^{-1}$. Integrating by parts as before and using the upper bound on $\ddbar\chi_a$ and $|\nabla \chi_a|$, we have
\begin{align*}
|J_t(a)| &\leq \frac{C}{a}\int_{X} \chi_a\log\Big(1 + \frac{ \n s \n_{qu}^2}{\vep^2}\Big) \zeta_{\ep,t}(s)\wedge\omega\\
&=  \frac{C}{a}\int_{X}\chi_a\log\Big(1 + \frac{\n s\n_{qu}^2}{\vep^2}\Big)\ddbar \log\Big(1 + \frac{\n s\n_{qu}^2}{\vep^2}\Big)\wedge\omega \\
&+ \frac{Cq}{a}\int_X\chi_a \log\Big(1 + \frac{\n s\n_{qu}^2}{\vep^2}\Big)\ddbar u\wedge\omega.
\end{align*}
The second integral is uniformly in $t$ and $a$ by Lemma \ref{lem:est-l2norm-can-ddbarphi-omega} and the elementary inequality $\ln(1+x) \leq x$ for $x\geq 0$. For the first integral, we note that it is non-negative and so it is enough to obtain a uniform (in $t$ and $a$) upper bound. Integrating by parts,
\begin{align*}
\text{first integral }&= -\int_{X}\chi_a\sqrt{-1}\partial\log\Big(1 + \frac{\n s\n_{qu}^2}{\vep^2}\Big)\wedge \dbar \log\Big(1 + \frac{\n s\n_{qu}^2}{\vep^2}\Big)\wedge\omega \\
&- \int_X \log\Big(1 + \frac{\n s\n_{qu}^2}{\vep^2}\Big) \sqrt{-1}\partial\chi_a\wedge  \dbar \log\Big(1 + \frac{\n s\n_{qu}^2}{\vep^2}\Big)\wedge\omega\\
&\leq \frac{C}{a}\int_X \log\Big(1 + \frac{\n s\n_{qu}^2}{\vep^2}\Big)  \Big|\nabla \log\Big(1 + \frac{\n s\n_{qu}^2}{\vep^2}\Big)\Big|\omega^2,
\end{align*}
where we used the fact that the first integral in the expression above is non-negative and once again the elementary observation that $|\al\wedge\be\wedge\omega| \leq |\al|\be|\omega^2$ for $(1,0)$ forms $\al,\be$ along with the estimate $|\nabla \chi_a|\leq Ca^{-1}$.  Finally,
\begin{align*}
\int_X \log\Big(1 + \frac{\n s\n_{qu}^2}{\vep^2}\Big)  \Big|\nabla \log\Big(1 + \frac{\n s\n_{qu}^2}{\vep^2}\Big)\Big|\omega^2 &\leq \frac{1}{\vep^2}\int_{X} \frac{\n s\n_{qu}^2}{\n s\n_{qu}^2 + \vep^2}|\nabla \n s\n_{qu}^2| \omega^2\\
&\leq  \frac{1}{\vep^2}\int_{X} |\nabla \n s\n_{qu}^2| \omega^2\\
&\leq  \frac{1}{\vep^2}\int_{X} \n s\n_{qu}\n\nabla_{qu}s\n_{qu} \omega^2\\
&\leq \frac{1}{\vep^2}\Big(\int_X \n s\n_{qu}^2\omega^2\Big)^{1/2}\Big(\int_X \n\nabla_{qu}s\n_{qu}^2\omega^2\Big)^{1/2}\\
&\leq C
\end{align*}
for some uniform constant $C$, independent of $a$ and $t$ by Lemma \ref{lem:section-est}. Note that we once again used the Kato inequality \eqref{eq:kato} in the third line.

\end{proof}

\section*{Acknowledgements}
The second author (Pingali) thanks Zakarias Sj\"ostr\"om Dyrefelt for his kind hospitality in Aarhus, as well as Tamas Darvas and Claude Lebrun for useful email exchanges. The third author (Seshadri) would like to thank Fangyang Zheng for helpful discussions. The authors would also like to thank Gautam Bharali and Purvi Gupta for discussions, support, and encouragement.

\end{document}